\newtheorem{theorem}{Theorem}[section]
\newtheorem{corollary}[theorem]{Corollary}
\newtheorem{lemma}[theorem]{Lemma}
\newtheorem{definition-theorem}[theorem]{Definition-Theorem}
\newtheorem{proposition}[theorem]{Proposition}
\theoremstyle{definition}
\newtheorem{definition}[theorem]{Definition}
\newtheorem{remark}[theorem]{Remark}
\newtheorem*{example*}{Example}
\numberwithin{equation}{section}
\newcommand{\C}{{\mathsf C}}
\newcommand{\CC}{\mathsf{C}}
\newcommand{\D}{{\mathsf D}}
\newcommand{\KK}{\mathsf{K}}
\newcommand{\T}{{\mathsf T}}
\newcommand{\add}{\mathsf{add}\hspace{.01in}}
\newcommand{\Kb}{\mathsf{K}^{\rm b}}
\newcommand{\Dfd}{\mathsf{D}_{\rm fd}}
\newcommand{\proj}{\mathsf{proj}\hspace{.01in}}
\newcommand{\per}{\mathsf{per}\hspace{.01in}}
\newcommand{\thick}{\mathsf{thick}\hspace{.01in}}
\newcommand{\dctilt}{{\it d}\strut\kern-.2em\operatorname{-ctilt}\nolimits}
\newcommand{\Ga}{\Gamma}
\newcommand{\tw}{{\rm tw}}
\newcommand{\TW}{{\rm TW}}
\newcommand{\id}{{\rm id}}
\newcommand{\fd}{\mathsf{fd}}
\newcommand{\dg}{\mathsf{dg}}
\newcommand{\Lotimes}{\mathop{{\otimes}^\mathbf{L}_\Ga}\nolimits}
\newcommand{\Lotimespi}{\mathop{{\otimes}^\mathbf{L}_\Pi}\nolimits}
\newcommand{\Le}{\mathsf{L}}
\newcommand{\Ri}{\mathsf{R}}
\newcommand{\tr}{\mathrm{tr}}
\newcommand{\ten}{\otimes}
\newcommand{\Hom}{\operatorname{Hom}\nolimits}
\newcommand{\RHom}{\mathbf{R}\strut\kern-.2em\operatorname{Hom}\nolimits}
\newcommand{\HHom}{\mathcal{H}\strut\kern-.2em\operatorname{om}\nolimits}
\newcommand{\silt}{\mbox{\rm silt}\hspace{.01in}}
\newcommand{\xto}{\xrightarrow}
\begin{document}
\title{derived preprojective algebras and spherical twist functors}

\author{Yuya Mizuno}
\address{Faculty of Liberal Arts, Sciences and Global Education, Osaka Metropolitan University, 1-1 Gakuen-cho, Naka-ku, Sakai, Osaka 599-8531, Japan}
\email{yuya.mizuno@omu.ac.jp}

\author{Dong Yang}
\address{D. Yang: School of Mathematics, Nanjing University, 22 Hankou Road, Nanjing 210093, P. R.
China}
\email{yangdong@nju.edu.cn}

\date{\today}
%\subjclass[2100]{Primary~16G20; Secondary~16G60}
\begin{abstract}
We study silting objects over derived preprojective algebras of acyclic quivers by giving a direct relationship between silting objects, spherical twist functors and mutations. Especially  
for a Dynkin quiver, we establish a bijection between the elements of the braid group and the set of isomorphism classes of basic silting objects over the derived preprojective algebra. \\
{\bf Key words}: derived preprojective algebra, silting object, spherical twist functor.\\
{\bf MSC 2020}: 16E45, 18G80, 16E35
\end{abstract}
\maketitle
%\tableofcontents
%%%%%%%%%%%%%%%%%%%%%%%%%%%%%%%%%%%%%%%%%%%%%%%%%%%%%%%%%%%%%%%%%%%%%%%%%%%%%%%%%%%%%%%%%%%%%%%%%

\section{Introduction}
Preprojective algebras of quivers form one of the most fundamental and important classes of algebras, and 
their representation theory has been extensively studied for a long time.
Among others, 
the recent developments of $\tau$-tilting theory and silting theory have provided a new method to study 
their derived categories (for example \cite{AM15,IR,BIRS09,M14,M21}). 
These algebras turn out to the 0-th cohomology of their derived version, namely, the (2-)derived preprojective algebras introduced in \cite{Ke2}, which are more symmetric, in the sense that their derived categories are 2-Calabi--Yau triangulated categories. There are two cases. If the quiver $Q$ is non-Dynkin, then the (usual) 
preprojetive algebra of $Q$ is quasi-isomorphic to the derived preprojective algebra of $Q$ and hence their derived categories are triangle equivalent. If the quiver $Q$ is Dynkin, then the derived preprojective algebra has non-trivial cohomology in each non-positive degree, so the preprojective algebra and the derived preprojective algebra are completely different. 
One of the remarkable difference is the existence of \emph{spherical objects} in the case of the derived preprojective algebra, 
while the usual preprojective algebra is selfinjective and their projective resolutions are periodic, so that there is no spherical objects. 
Recall that spherical objects and spherical twist functors were introduced by 
Seidel--Thomas \cite{ST01} and they provide  auto-equivalences of triangulated
categories. One of the important results of \cite{ST01} is the braid group relation of the spherical twist functors and their faithful action in type $\mathbb{A}$.
 
The aim of this paper is to study silting objects of derived preprojective algebras by studying the interaction between silting theory and derived equivalences. More precisely, we 
give an explicit description of spherical twist functors as derived tensor functors of silting complexes and explain the braid group action on the functors.  
Let $Q$ be an acyclic quiver. Let $B_Q$ be the braid group associated to $Q$ with generators $a_i$, where $i$ runs over all vertices of $Q$, and $\Gamma:=\Gamma(Q)$ be the derived preprojective algebra of $Q$ (see Section \ref{ss:derived-preprojective-algebra}). 
For a vertex $i$ of $Q$, let $e_i$  and $t_i$ be the corresponding trivial path and the loop of degree $-1$, respectively. Let 
\[
I_i:=I_i^+=\Gamma(1-e_i)\Gamma+\Gamma t_i\Gamma,
\] 
and $I_i^-:=\Hom_{\C_{\dg}(\Gamma)}(I_i,\Gamma)$, which is a dg $\Gamma$-$\Gamma$-bimodule, where $\C_{\dg}(\Gamma)$ is the dg category of right dg $\Gamma$-modules.  
We denote by $\tw_{S_i}^-(-)$ the dual twist functor (Section~\ref{sss:twist-funtors}), where $S_i=\Gamma/I_i$. The following theorem is our main result, which establishes a relation between spherical twist functors and mutations
as well as a relation between the braid group elements and silting objects over $\Gamma$.

\begin{theorem}[{Theorem~\ref{main1}, Lemma~\ref{I=mu}, Theorem~\ref{main2} and Corollary~\ref{cor:braid-group-action}}]
\label{thm:main}
\mbox{}
\begin{itemize}
\item[(1)] %Let $i$ be a vertex of $Q$. 
There is an isomorphism of triangle autoequivalences of the derived category $\D(\Gamma)$
$$-\Lotimes I_i\cong\tw_{S_i}^{-}(-),$$
which restrict to isomorphic autoequivalences of $\per(\Gamma)$.
\item[(2)]
There is an isomorphism in the perfect derived category $\per(\Ga)$ 
\[
I_{i_1}^{\epsilon_{1}}\ten_\Gamma I_{i_2}^{\epsilon_{2}}\ten_\Gamma\cdots \ten_\Gamma I_{i_k}^{\epsilon_{k}}\cong \mu_{i_1}^{\epsilon_{1}}\circ\mu_{i_2}^{\epsilon_2}\cdots \circ \mu_{i_k}^{\epsilon_{k}}(\Ga),
\]
for any sequence $(\epsilon_1,\ldots,\epsilon_k)$ of signs, where $\mu^+$ (respectively, $\mu^-$) is the left (respectively, right) mutation. 
\item[(3)] The assignment $a= a_{i_1}^{\epsilon_{1}}a_{i_2}^{\epsilon_2}\cdots a_{i_k}^{\epsilon_{k}}\mapsto I_{i_1}^{\epsilon_{1}}\ten_\Gamma I_{i_2}^{\epsilon_{2}}\ten_\Gamma\cdots \ten_\Gamma I_{i_k}^{\epsilon_{k}}$
defines a map from the braid group $B_Q$ to the set of equivalence classes of silting objects of $\per(\Gamma)$. It is bijective when $Q$ is Dynkin.
\item[(4)] The braid group $B_Q$ acts on $\D(\Gamma)$. When $Q$ is Dynkin, this action is faithful.
\end{itemize}
\end{theorem}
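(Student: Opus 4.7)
The plan is to handle the four parts in sequence, deriving parts (2)--(4) as consequences of the key identification in part (1).

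For part (1), I would derive-tensor the short exact sequence of $\Gamma$-bimodules $0 \to I_i \to \Gamma \to S_i \to 0$ with an arbitrary right dg $\Gamma$-module $M$, producing a functorial triangle $M\Lotimes I_i \to M \to M\Lotimes S_i$ in $\D(\Gamma)$. The goal is to identify this with the defining triangle of the dual spherical twist functor $\tw^-_{S_i}(-)$, where the identification of the third terms uses that $S_i$ is a $2$-spherical object (a consequence of $\Gamma$ being $2$-Calabi--Yau). A perhaps cleaner approach is to construct a natural transformation between the two exact cocontinuous endofunctors $-\Lotimes I_i$ and $\tw^-_{S_i}(-)$ and verify it is an isomorphism on the compact generator $\Gamma$: since $\RHom_\Gamma(\Gamma, S_i)\cong S_i$ is one-dimensional, the twist triangle at $\Gamma$ reduces to $\tw^-_{S_i}(\Gamma) \to \Gamma \to S_i$, yielding $\tw^-_{S_i}(\Gamma) \cong I_i \cong \Gamma \Lotimes I_i$; compatibility with coproducts and triangles extends the isomorphism to all of $\D(\Gamma)$, while perfectness of $I_i$ as a dg bimodule forces the restriction to $\per(\Gamma)$.

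For part (2), I would proceed by induction on $k$, with Lemma~\ref{I=mu} providing the base case. For the inductive step, setting $T := I_{i_2}^{\epsilon_2}\ten_\Gamma\cdots\ten_\Gamma I_{i_k}^{\epsilon_k}$ and assuming inductively that $T \cong \mu_{i_2}^{\epsilon_2}\circ\cdots\circ\mu_{i_k}^{\epsilon_k}(\Gamma)$ as silting objects of $\per(\Gamma)$, it remains to show $I_{i_1}^{\epsilon_1}\ten_\Gamma T \cong \mu_{i_1}^{\epsilon_1}(T)$. Tensoring the bimodule triangle of part (1) with $T$ on the right yields a triangle in $\per(\Gamma)$ which, after identifying the summand of $T$ indexed by vertex $i_1$ via the labelling transported through iterated tensoring, should match the exchange triangle defining $\mu_{i_1}^{\epsilon_1}(T)$. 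I expect the main obstacle here to be precisely this tracking of labellings on indecomposable summands of $T$ through the iterated tensor products; the key fact to exploit is that tensoring with any $I_i^{\epsilon}$ preserves the natural $Q_0$-grading coming from the idempotents $e_j$.

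For part (3), well-definedness of the assignment requires verifying the braid relations on the bimodule side: $I_i\ten_\Gamma I_j \cong I_j\ten_\Gamma I_i$ when $i,j$ are not joined in $Q$, and $I_i\ten_\Gamma I_j\ten_\Gamma I_i \cong I_j\ten_\Gamma I_i\ten_\Gamma I_j$ when they are joined by a single edge. By part (1) these translate into the braid relations among the functors $\tw^-_{S_i}$, which are a direct consequence of Seidel--Thomas once one observes that $\{S_i\}_{i\in Q_0}$ is a spherical collection whose intersection pattern reflects the underlying graph of $Q$. When $Q$ is Dynkin, surjectivity combines part (2) with the fact that every basic silting object of $\per(\Gamma)$ is reachable from $\Gamma$ by iterated mutation. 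Injectivity is the hard direction: distinct braid words yielding isomorphic silting objects would have to be ruled out by a combinatorial invariant (for instance the $\mathbf{g}$-vector of the silting object, or equivalently the action on a suitable root lattice), and this ultimately relies on a faithfulness result in the style of Brav--Thomas for Dynkin $Q$.

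Finally, for part (4), the action of $B_Q$ on $\D(\Gamma)$ is defined by sending each generator $a_i$ to the autoequivalence $-\Lotimes I_i \cong \tw^-_{S_i}$ of part (1); the braid relations have been verified in part (3). Faithfulness in the Dynkin case is then immediate from the injectivity in part (3): any element acting as the identity on $\D(\Gamma)$ would in particular send $\Gamma$ to $\Gamma$, forcing it to be the identity element of $B_Q$.
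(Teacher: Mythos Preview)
Parts (1) and (4) match the paper's approach. For (2), the paper avoids your anticipated labelling difficulty by a cleaner device: since $-\ten_\Gamma I_{a'}$ is a triangle \emph{equivalence}, it preserves $\add$-approximations and hence carries the mutation triangle at $i_1$ for $\Gamma$ to one for $I_{a'}$; combined with the base case $I_{i_1}^{\epsilon_1}\cong\mu_{i_1}^{\epsilon_1}(\Gamma)$ (this is Lemma~\ref{mutation and ideal}, not Lemma~\ref{I=mu}), this gives $I_{i_1}^{\epsilon_1}\ten_\Gamma I_{a'}\cong\mu_{i_1}^{\epsilon_1}(I_{a'})$ without tracking summands by hand. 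For the braid relations in (3), your appeal to Seidel--Thomas is a valid alternative; the paper instead verifies them by explicit computation with products of dg ideals inside $\Gamma$ (Lemma~\ref{lem:braid-relations}).

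The genuine gap is injectivity in (3). Your $\mathbf{g}$-vector suggestion cannot work: the action of $-\Lotimes I_i$ on $K_0(\per\Gamma)$ sends $[e_i\Gamma]\mapsto[R_i]-[e_i\Gamma]$ and fixes the other classes, so it is the simple reflection $s_i$ and the $B_Q$-action on $K_0$ factors through the finite Weyl group, which cannot separate elements of the infinite group $B_Q$. You are right that the argument is ``in the style of Brav--Thomas'', but the paper does not invoke a black-box faithfulness result; it runs the induction directly in $\silt(\Gamma)$, and the crucial input you are missing is \emph{silting-discreteness} of $\per(\Gamma)$ (established in \cite{AMY}). This is also what justifies your surjectivity step, via \cite{Ai13}. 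For injectivity one reduces to the positive monoid $B_Q^+$ using \cite[Lemma~2.3]{BT11} and inducts on length: if $I_{b'a_i}\cong I_{c'a_j}$ with $i\neq j$, the order-reversing property together with the existence of the join $a_{i,j}$ of $a_i,a_j$ in $B_Q^+$ gives $I_{a_{i,j}}\geq I_b$; silting-discreteness forces only finitely many silting objects in this interval, so a finite chain of left mutations descends from $I_{a_{i,j}}$ to $I_b$, producing $d\in B_Q^+$ with $da_{i,j}=b$, and symmetrically $da_{i,j}=c$.
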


In Section~\ref{s:preminimaries}, we will recall the relevant definitions and basic results. In Section~\ref{s:main-results} we will prove Theorem~\ref{thm:main}.

%%%%%%%%%%%%%%%%%%%%%%%%%%%%%%%%%%%%%%%%%%%%%%%%%%%%%%%%%%%%%%%%%%%%%%%%%%%%%%%%%%%%%%%%%%%%%%%%%
\subsection*{Notation and convention}
Throughout this paper, let $K$ be a field and let $D=\Hom_K(-,K)$ denote the $K$-dual. 
Let $\T$ be a $K$-linear additive category. We say that $\T$ is \emph{Hom-finite} if all morphism spaces of $\T$ are finite-dimensional over $K$. For an object $M$ of $\T$, we denote by $\add(M)$ the smallest full subcategory of $\T$ which contains $M$ and which is closed under taking finite direct sums and direct summands. %For a full subcategory $\X$ of $\T$, define full subcategories $\X^{\perp_{\T}}:=\X^\perp$ and ${}^{\perp_{\T}}\X:={}^{\perp}\X$ of $\T$ as 
%\begin{align*}\X^\perp&:=\{Y\in\T\mid \Hom_\T(X,Y)=0~\forall X\in\X\}\\{}^\perp\X&:=\{Y\in\T\mid \Hom_\T(Y,X)=0~\forall X\in\X\}.\end{align*}

Assume that $\T$ is triangulated with shift functor $[1]$. For an object $M$ of $\T$, we denote by $\thick(M)$ the \emph{thick subcategory} of $\T$ generated by $M$, that is, the smallest triangulated subcategory of $\T$ which contains $M$ and which is closed under taking direct summands. %For two full subcategories $\X$ and $\Y$ of $\T$, define $\X*\Y$ as the full subcategory of $\T$ consisting of objects $M$ which admits a triangle $X\to M\to Y\to X[1]$ with $X\in\X$ and $Y\in\Y$. 

%Let $\Lambda$ be a finite-dimensional $K$-algebra. We denote by $\mod\Lambda$ the category of finite-dimensional right modules over $\Lambda$ and by $\D^b(\mod\Lambda)$ the bounded derived category of $\mod\Lambda$. We denote by $\proj\Lambda$ the category of finite-dimensional right projective modules over $\Lambda$ and by $\KK^b(\proj\Lambda)$ the bounded homotopy category of $\proj\Lambda$.

\subsection*{Acknowledgement}
The authors thank Bernhard Keller and Martin Kalck for informing them of the results in \cite{BapatDeopurkarLicata23} and \cite{HaraWemyss22}. They thank a referee for very carefully reading the manuscript and for giving them valuable comments. The first author is supported by 
Grant-in-Aid for Scientific Research (C) 20K03539. 
The second author acknowledges support by the National Natural Science Foundation of China No. 12031007 and No. 11401297.

%Let $K$ be a field.
%Throughout this paper, a triangulated category $\T$ is assumed to be $K$-linear Hom-finite Krull-Schmidt category with the shift functor $[1]$ unless we state otherwise. 
%All subcategories are strictly full.
%
%Let $\X,\Y$ be subcategories of $\T$. 
%Then $\Hom_{\T}(\X,\Y)=0$ means $\Hom_{\T}(X,Y)=0$ for all objects $X\in \X$ and $Y\in Y$.
%$X\ast Y:=\add X\ast \add Y$.
%\comment{$\mod \Lambda$}
%${}^{\perp}M$ and $M^\perp$.
%We denote by $\add M$ ... and by $\thick M$ ....
%All shift functor is $[1]$
%%%%%%%%%%%%%%%%%%%%%%%%%%%%%%%%%%%%%%%%%%%%%%%%%%%%%%%%%%%%%%%%%%%%%%%%%%%%%%%%%%%%%%%%%%%%%%%%%

\section{Preliminaries}
\label{s:preminimaries}

The aim of this section is to recall the definitions and some basic results on silting objects, silting mutation, derived categories of dg algebras, spherical twist functors and derived preprojective algebras. 
%\subsection{Krull--Schmidt categories}We say that $\T$ is \emph{Hom-finite} if all morphism spaces of $\T$ are finite-dimensional over $K$. %We say that $\T$ is  \emph{idempotent complete} if any idempotent $e:M\to M$ in $\T$ arises from a direct sum decomposition $M\cong \mathrm{im}(e)\oplus \mathrm{ker}(e)$. We say that $\T$ is \emph{Krull--Schmidt} if every object $M$ of $\T$ is isomorphic to $M_1\oplus\ldots \oplus M_n$ with $M_1,\ldots,M_n$ having local endomorphism algebras. %If $\T$ is Hom-finite, then $\T$ is Krull--Schmidt if and only if $\T$ is idempotent complete. The object $M$ is said to be \emph{basic} if in the above decomposition $M_1\oplus\ldots\oplus M_n$ the objects $M_1,\ldots,M_n$ are pairwise non-isomorphic. In this case, we denote $|M|=n$.
%%%%%%%%%%%%%%%%%%%%%%%%%%%%%%%%%%%%%%%%%%%%%%%%%%%%%%%%%%%%%%%%%%%%%%%%%%%%%%%%%%%%%%%%%%%%%%%%%

\subsection{Silting objects}
\label{sectionsilting}
\label{ss:silting}
Let $\T$ be a $K$-linear triangulated category with shift functor $[1]$. 
We recall the definition of silting objects and silting mutations from \cite{AI12}.

\begin{definition}
\begin{itemize}
\item[(1)] An object $M$ of $\T$ is said to be \emph{presilting} if $\Hom_{\T}(M,M[p])=0$ for all positive integers $p$.
\item[(2)] A presilting object $M$ of $\T$ is said to be \emph{silting} if $\T=\thick(M)$. 
\item[(3)] Two silting objects $M$ and $N$ are said to be \emph{equivalent} if $\add(M)=\add(N)$. We denote by $\silt\T$ the set of equivalence classes of silting objects of $\T$. 
\end{itemize}
\end{definition}

Here is an example: 
Let $\Lambda$ be a $K$-algebra. Then $\Lambda$ is a silting object of the bounded homotopy category $\Kb(\proj\Lambda)$ of finitely generated projective right $\Lambda$-modules. 

%We assume that $\T$ is Krull--Schmidt (i.e., every object $M$ of $\T$ is isomorphic to $M_1\oplus\ldots \oplus M_n$ with $M_1,\ldots,M_n$ having local endomorphism algebras) and Hom-finite (i.e. all morphism spaces of $\T$ are finite-dimensional). 

For a decomposition $M=X\oplus Y$ such that $\add(X)\cap\add(Y)=0$, if there is a triangle
\begin{align}
X\xto{f}Y'\xto{}X^{\ast}\xto{}X[1] \notag
\end{align}
with a left $\add Y$-approximation $f$ of $X$, then we call $\mu_{X}^{\Le}(M):=X^{\ast}\oplus Y$ a \emph{left mutation} of $M$ with respect to $X$.  By \cite[Theorem 2.31]{AI12}, a left mutation of a silting object is again a silting object. It is well-defined on equivalence classes of silting objects. If $\T$ is Krull--Schmidt and Hom-finite, we usually take $f$ to be minimal so that if $M$ is basic then so is $\mu_X^L(M)$ and we can talk about \emph{the} left mutation.
Dually, we define a \emph{right mutation} $\mu_{X}^{\Ri}(M)$. In this paper, by \emph{mutation} will mean left or right mutation.  If $X$ is indecomposable, we call the mutation an \emph{irreducible mutation}.

For objects $M,N$ of $\T$, we write $M\ge N$ if $\Hom_{\T}(M,N[n])=0$ for all positive integers $n$.
Then the relation $\ge$ gives a partial order on $\silt\T$ by \cite[Theorem 2.11]{AI12}. If $N$ is obtained from a silting object $M$ by a sequence of left mutations, then we have $M>N$ \cite[Theorem 2.35]{AI12}.

%Next we recall the notion of silting-discrete triangulated categories.
%\begin{definition}\label{def silting-discrete}A triangulated category $\T$ is said to be \emph{silting-discrete} if, for any basic silting object $M$, the set $\ssilt{n_{M}}\T$ is finite for any  positive integer $n$.\end{definition}

%Moreover, if $\T$ is silting-discrete, then we can obtain all silting objects in $\T$ from a silting object by a finite sequence of mutations (see \cite[Corollary 3.9]{Ai13}). By the result of \cite{AM15}, we have a criterion for a triangulated category to be silting-discrete.

%\begin{lemma}[{\cite[Theorem 2.4]{AM15}}]\label{AM} The category $\T$ is silting-discrete if and only if, for an arbitrary basic silting object $M$ of $\T$, the set $\ssilt{2_{M}}\T$ is finite.\end{lemma}

%%%%%%%%%%%%%%%%%%%%%%%%%%%%%%%%%%%%%%%%%%%%%%%%%%%%%%%%
%%%%%%%%%%%%%%%%%%%%%%%%%%%%%%%%%%%%%%%%%%%%%%%%%%%%%%%%%%%%%%%%%%%%%%%%%%%%%%%%%%%%%%%%%%%%%%%%%%%%%%%%%%%%%%%%%%%%%%%%%%%%%%%%%%%%%%%%%%%%%%%%
\subsection{Derived categories of dg algebras}
Let $A$ be a differential graded (dg for short) $K$-algebra. Let $\C_{\dg}(A)$ be the dg category of right dg $A$-modules, see \cite[Section 3.1]{Ke4}. For $M,N\in\C_{\dg}(A)$, the Hom-complex $\Hom_{\C_{\dg}(A)}(M,N)$ in $\C_{\dg}(A)$ is the complex of $K$-vector spaces with its degree $p$ component $\Hom^p_{\C_{\dg}(A)}(M,N)$ being the space of homogeneous $A$-linear maps of degree $p$ from $M$ to $N$ (here we consider $A$ as a graded algebra and $M,N$ as graded $A$-modules) and with its differential defined by $d(f)=d_N\circ f-(-1)^p f\circ d_M$ for $f\in\Hom^p_{\C_{\dg}(A)}(M,N)$. For example, $\Hom_{\C_{\dg}(A)}(A,A)=A$. A right dg $A$-module $M$ is said to be \emph{$\KK$-projective} if $\Hom_{\C_{\dg}(A)}(M,N)$ is acyclic for any acyclic right dg $A$-module $N$. For example, $A_A$ is $\KK$-projective.  The property of being $\KK$-projective is closed under taking direct sums, direct summands, shifts, and mapping cones of homomorphisms of dg modules. Here, by a homomorphism, we mean a closed morphism of degree $0$.

The dg category $\C_{\dg}(A)$ is a pretriangulated dg category in the sense of \cite[Section 4.3]{Ke4}. Let $\KK(A)$ be the homotopy category of $\C_{\dg}(A)$, see \cite[Section 2.2]{Ke4}. Precisely, the objects of $\KK(A)$ are right dg $A$-modules, and for two right dg $A$-modules $M$ and $N$ we have
\[
\Hom_{\KK(A)}(M,N)=H^0\Hom_{\C_{\dg}(A)}(M,N).
\]
$\KK(A)$ is a triangulated category with shift functor being the shift of dg modules. Let $\mathrm{acyc}(A)$ be the full subcategory of $\KK(A)$ consisting of acyclic right dg $A$-modules. The \emph{derived category} $\D(A)$ of right dg $A$-modules is defined as the triangle quotient of $\KK(A)$ by $\mathrm{acyc}(A)$. For $p\in\mathbb{Z}$ and, for right dg $A$-modules $M$ and $N$ with $M$ being $\KK$-projective, we have
\begin{align}\label{eq:Hom-space-in-derived-category}
\Hom_{\D(A)}(M,N[p])=\Hom_{\KK(A)}(M,N[p])=H^0\Hom_{\C_{\dg}(A)}(M,N[p]).
\end{align}
Let $\per(A)$ be the thick subcategory of $\D(A)$ generated by $A_A$, and $\Dfd(A)$ the full subcategory of $\D(A)$ consisting of right dg $A$-modules whose total cohomology is finite-dimensional over $K$. For dg $K$-algebras $A$ and $B$, a triangle equivalence $F:\D(B)\to\D(A)$ restricts to triangle equivalences $\per(B)\to\per(A)$ and $\Dfd(B)\to\Dfd(A)$, see for example \cite[Lemmas 2.6 and 2.7]{AMY}.

\begin{lemma}\label{lem:non-positive-dg-algebra-and-silting}
If $H^{p}(A)=0$ for all $p>0$, then $A$ is a silting object of $\per(A)$.
\end{lemma}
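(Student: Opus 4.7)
The plan is to verify the two defining conditions of a silting object directly, using the Hom-space formula \eqref{eq:Hom-space-in-derived-category} recalled just above the statement. The condition $\per(A) = \thick(A_A)$ holds by the very definition of $\per(A)$, so only the presilting condition $\Hom_{\D(A)}(A, A[p]) = 0$ for $p > 0$ needs proof.

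First I would recall that $A_A$ was noted to be $\KK$-projective. Therefore I can apply \eqref{eq:Hom-space-in-derived-category} with $M = N = A$ to obtain
\[
\Hom_{\D(A)}(A, A[p]) = H^0\Hom_{\C_{\dg}(A)}(A, A[p]).
\]
Next I would use the identification $\Hom_{\C_{\dg}(A)}(A, A) = A$ stated in the preliminaries, which gives $\Hom_{\C_{\dg}(A)}(A, A[p]) = A[p]$ as complexes of $K$-vector spaces. Taking $H^0$ of this shift yields $H^p(A)$, so altogether
\[
\Hom_{\D(A)}(A, A[p]) = H^p(A).
\]

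The hypothesis $H^p(A) = 0$ for $p > 0$ then immediately gives the presilting condition, completing the proof. There is no real obstacle here; the only point to be careful about is the shift convention, namely that $H^0$ of $A[p]$ equals $H^p(A)$, which is standard.
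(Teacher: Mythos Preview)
Your proposal is correct and follows exactly the paper's own argument: the paper's proof is the single sentence ``This follows from the formula \eqref{eq:Hom-space-in-derived-category} since $\Hom_{\C_{\dg}(A)}(A,A)=A$,'' and you have simply unpacked that sentence in full detail.
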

\begin{proof}
This follows from the formula \eqref{eq:Hom-space-in-derived-category} since $\Hom_{\C_{\dg}(A)}(A,A)=A$.
\end{proof}
%%%%%%%%%%%%%%%%%%%%%%%%%%%%%%%%%%%%%%%%%%%%%%%%%%%%%%%%%%%%%%%%%%%%%%%%%%%%%%%%%%%%%%%%%%%%%%%%%

Let $A$ and $B$ be dg $K$-algebras and $F:\C_{\dg}(B)\to \C_{\dg}(A)$ be a dg functor. The \emph{left derived functor} $\mathbf{L}F:\D(B)\to\D(A)$ of $F$ is the triangle functor $\pi\circ F\circ\mathbf{p}$, where $\mathbf{p}:\D(B)\to\KK(B)$ is the functor of taking $\KK$-projective resolutions (\cite[Section 3]{Ke1}) and $\pi:\KK(A)\to\D(A)$ is the projection (see \cite[Section 6.4]{Ke1}). Moreover, $F(B)$ admits the structure of a left dg $B$-module via the dg algebra homomorphism $F(B,B):B\to\Hom_{\C_{\dg}(A)}(F(B),F(B))$, making $F(B)$ a dg $B$-$A$-bimodule. The following lemma is obtained by applying \cite[Lemma 6.4]{Ke1} to the special case $\mathcal{A}=A$ and $\mathcal{B}=B$.

\begin{lemma}\label{lem:turning-a-derived-functor-into-tensor}
Assume that $\mathbf{L}F$ commutes with infinite direct sums. Then $\mathbf{L}F$ is isomorphic to $-\mathop{{\otimes}^\mathbf{L}_B} X:\D(B)\to \D(A)$, where $X=F(B)$.
\end{lemma}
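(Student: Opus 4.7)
The plan is to construct a natural transformation $\eta: -\lten_B X \Rightarrow \mathbf{L}F$ of triangle functors on $\D(B)$, verify that $\eta_B$ is an isomorphism, and then extend the isomorphism to all of $\D(B)$ by a d\'evissage on the domain.

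First I would build $\eta$ on the underlying dg level. For a $\KK$-projective dg $B$-module $P$, each homogeneous element $p \in P$ defines a right dg $B$-linear map $\rho_p: B \to P$, $b \mapsto pb$; applying $F$ gives $F(\rho_p): X \to F(P)$, and the assignment $p \otimes x \mapsto F(\rho_p)(x)$ assembles into a morphism $\eta_P: P \otimes_B X \to F(P)$ in $\C_{\dg}(A)$. The required $B$-balancing and compatibility with differentials are direct consequences of the fact that $F(B,B): B \to \End_{\C_{\dg}(A)}(X)$ is a dg algebra homomorphism, together with the identity $\rho_{pb} = \rho_p \circ L_b$, where $L_b: B \to B$ denotes left multiplication by $b$. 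Passing to $\KK$-projective resolutions (and using that $- \otimes_B X$ preserves homotopy equivalences between $\KK$-projectives) then yields the natural transformation $\eta: -\lten_B X \Rightarrow \mathbf{L}F$ of triangle functors. At $M = B$ the map $\eta_B$ is an isomorphism, since both sides equal $X$ (as $B$ is $\KK$-projective) and the formula gives $\eta_B(b \otimes x) = F(B,B)(b)(x)$, which is precisely the canonical identification.

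To conclude, let $\mathcal{E}$ be the full subcategory of $\D(B)$ consisting of those $M$ for which $\eta_M$ is an isomorphism in $\D(A)$. Because $\eta$ is a natural transformation between triangle functors, $\mathcal{E}$ is a triangulated subcategory closed under direct summands; by the previous step it contains $B$, and hence contains $\per(B) = \thick(B)$. Now $-\lten_B X$ automatically commutes with arbitrary direct sums and $\mathbf{L}F$ does so by hypothesis, so $\mathcal{E}$ is closed under arbitrary direct sums and consequently under the homotopy colimits computed by telescope triangles. Since $\D(B)$ is compactly generated by $B$, every object of $\D(B)$ is such a homotopy colimit of perfect objects, forcing $\mathcal{E} = \D(B)$.

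The main obstacle I expect lies in the bookkeeping of the first step: one must check that the pointwise formula $p \otimes x \mapsto F(\rho_p)(x)$ genuinely assembles into a morphism of dg $A$-modules that is natural in $P$ and behaves well under change of $\KK$-projective resolution, so that it descends to an honest natural transformation of the derived functors rather than merely a family of isomorphisms. Once this is in place the d\'evissage above is entirely routine, which is essentially why \cite[Lemma 6.4]{Ke1} is proved at the more general level of dg categories $\mathcal{A}$ and $\mathcal{B}$ and then specialized here to $\mathcal{A} = A$, $\mathcal{B} = B$.
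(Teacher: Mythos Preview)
Your argument is correct and is precisely the standard proof of this fact: build the comparison map on representables, check it there, and propagate via the localizing subcategory generated by $B$. The paper itself does not give an independent argument but simply specializes \cite[Lemma~6.4]{Ke1} to the one-object case $\mathcal{A}=A$, $\mathcal{B}=B$; what you have written is essentially an unpacking of that cited lemma, as you yourself observe in your final paragraph. So the approaches coincide, with the only difference being that you spell out the d\'evissage while the paper defers to Keller.
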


Let $X$ be a dg $B$-$A$-bimodule. If $X$ as a left dg $B$-module is $\KK$-projective, then $-\ten_B X$ preserves acyclicity and hence induces a triangle functor $-\ten_B X:\D(B)\to \D(A)$. Moreover, if $f: \mathbf{p}M\to M$ is a $\KK$-projective resolution, then $f\ten \id: \mathbf{p}M\ten_B X\to M\ten_B X$ is a quasi-isomorphism, inducing a natural isomorphism $-\mathop{{\otimes}^\mathbf{L}_B} X\to -\ten_B X$. This is part (a) of the following lemma. Part (b) is obtained by applying \cite[Lemma 6.2 b)]{Ke1} to the special case $\mathcal{A}=A$ and $\mathcal{B}=B$.

\begin{lemma} \label{lem:derived-tensor-functor}
Let $X$ be a dg $B$-$A$-bimodule. 
\begin{itemize}
\item[(a)]
If $X$ as a left dg $B$-module is $\KK$-projective, then $-\mathop{{\otimes}^\mathbf{L}_B} X\cong -\ten_B X$.
\item[(b)]
If $X$ as a right dg $A$-module is $\KK$-projective and $-\mathop{{\otimes}^\mathbf{L}_B} X$ is a triangle equivalence, then $-\mathop{{\otimes}^\mathbf{L}_B} \Hom_{\C_{\dg}(A)}(X,A)$ is a quasi-inverse of $-\mathop{{\otimes}^\mathbf{L}_B} X$.
\end{itemize}
\end{lemma}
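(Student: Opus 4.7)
Part (a) is a routine unfolding of definitions. For $M\in\D(B)$, by definition $M\mathop{{\otimes}^\mathbf{L}_B} X=\mathbf{p}M\ten_B X$, where $\mathbf{p}M\to M$ is a $\KK$-projective resolution. So it suffices to show that the canonical map $\mathbf{p}M\ten_B X\to M\ten_B X$ is a quasi-isomorphism for every $M$. Since $-\ten_B X$ commutes with mapping cones, this reduces to showing that $-\ten_B X$ sends acyclic right dg $B$-modules to acyclics, which is the standard fact that a left $\KK$-projective dg $B$-module is also $\KK$-flat: the class of $X$ with this property is closed under shifts, direct sums, direct summands, and mapping cones, and trivially contains $B_B$.

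For part (b) the plan is to combine the tensor-Hom adjunction with the equivalence hypothesis. The adjunction $\Hom_A(M\ten_B X,N)\cong\Hom_B(M,\Hom_A(X,N))$ derives to give that $\RHom_A(X,-)\colon\D(A)\to\D(B)$ is right adjoint to $-\mathop{{\otimes}^\mathbf{L}_B} X$; since any triangle equivalence admits its right adjoint as a quasi-inverse, this right adjoint is the sought quasi-inverse. Because $X$ is $\KK$-projective as a right dg $A$-module, no resolution is needed in the first variable for derived Hom, and we obtain $\RHom_A(X,-)\cong\Hom_A(X,-)$ on the nose.

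It then remains to identify $\Hom_A(X,-)$ with the derived tensor functor determined by $Y:=\Hom_{\C_{\dg}(A)}(X,A)$, viewed as a dg bimodule. There is a canonical evaluation natural transformation
\[
\eta_M\colon M\ten Y\longrightarrow \Hom_A(X,M),\qquad m\ten\phi\mapsto\bigl(x\mapsto m\cdot\phi(x)\bigr),
\]
which is tautologically an isomorphism for $M=A$. Both functors of $M$ are triangulated and commute with arbitrary direct sums; for the target, this uses that $X=B\mathop{{\otimes}^\mathbf{L}_B} X$ is the image under an equivalence of the compact generator $B\in\D(B)$, hence is itself a compact generator of $\D(A)$. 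The class of $M$ on which $\eta_M$ is an isomorphism is therefore a localizing subcategory of $\D(A)$ containing $A$, so it equals all of $\D(A)$.

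I expect the main obstacle to be the bookkeeping in this last step: verifying that $\eta$ is genuinely a natural transformation of triangle functors, that $M\ten Y$ already computes the derived tensor in the needed generality (so that no further resolution is required on either factor), and that the compactness-based d\'evissage is valid on unbounded $\D(A)$. This is precisely the formal content of \cite[Lemma~6.2~b)]{Ke1}, which one can invoke wholesale to supply the necessary framework.
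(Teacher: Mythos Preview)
Your approach mirrors the paper's: for (a) both you and the paper reduce to the claim that $-\ten_B X$ preserves acyclicity when $X$ is $\KK$-projective on the left, and for (b) both ultimately invoke \cite[Lemma~6.2~b)]{Ke1}; you simply spell out the compact-generator d\'evissage that the citation encapsulates.

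One small gap worth flagging in your write-up of (a): your justification for ``$\KK$-projective $\Rightarrow$ $\KK$-flat'' is not sufficient as stated. Observing that the class of $\KK$-flat left modules contains $B$ and is closed under shifts, direct sums, summands, and cones only shows that objects in $\thick(B)$ are $\KK$-flat; general $\KK$-projectives need not lie in this thick subcategory (they are typically built from $B$ via transfinite extensions, as in semi-free resolutions). The standard argument instead uses the $K$-dual: for acyclic $N$ one has $D(N\ten_B X)\cong \Hom_{B}(X,DN)$, and the right-hand side is acyclic because $DN$ is and $X$ is $\KK$-projective; since $D$ reflects acyclicity on complexes of $K$-vector spaces, $N\ten_B X$ is acyclic. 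The paper itself simply asserts this preservation of acyclicity without proof, so your argument is not weaker than the paper's, but the parenthetical justification you give should be replaced or dropped.
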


%%%%%%%%%%%%%%%%%%%%%%%%%%%%%%%%%%%%%%%%%%%%%%%%%%%%%%%%%%%%%%%%%%%%%%%%%%%%%%%%%%%%%%%%%%%%%%%%%
\subsection{Twist functors}\label{sss:twist-funtors}
We follow \cite{AL13} to define twist functors.
Let $A$ be a dg $K$-algebra. Let $E$ be a right dg $A$-module belonging to $\per(A)\cap\D_\fd(A)$.  We consider $E$ as a dg $K$-$A$-bimodule, and define the \emph{twist functor} $\TW_E\colon \CC_{\dg}(A)\to\CC_{\dg}(A)$ and the
\emph{dual twist functor} $\TW^-_E\colon\CC_{\dg}(A)\to\CC_{\dg}(A)$ by 
\begin{align*}
\TW_E(M)&:=\mathrm{Cone}(\Hom_{\C_{\dg}(A)}(E,M)\ten_K E\overset{\rm ev}{\longrightarrow} M),\\
\TW^-_E(M)&:=\mathrm{Cone}(M\overset{\rm coev}{\longrightarrow}\Hom_{\C_{\dg}(K)}(E^{\tr_K},M\otimes_A E^{\tr_K}))[-1],
\end{align*}
where $E^{\tr_K}=\Hom_{\C_{\dg}(K)}(E,K)$.

Let $\tw^-_E=\mathbf{L}\TW^-_E:\D(A)\to\D(A)$ be the left derived functor of $\TW^-_E$ and $\tw_E=\mathbf{R}\TW_E:\D(A)\to \D(A)$ be the right derived functor of $\TW_E$. By \cite[Proposition 5.3]{AL13}, $\tw^-_E$ is left adjoint to $\tw_E$, in particular, it commutes with infinite direct sums. On objects we have
\begin{align*}
\tw^-_E(M)&=\mathrm{Cone}(\mathbf{p}M\overset{\rm coev}{\longrightarrow}\Hom_{\C_{\dg}(K)}(E^{\tr_K},\mathbf{p}M\otimes_A E^{\tr_K}))[-1].
\end{align*}
For $M\in\D(A)$, there is a chain of quasi-isomorphisms due to the assumption $E\in\D_\fd(A)$
\[
M\ten_A E^{\tr_K}\ten_K E\to M\ten_A E^{\tr_K}\ten_K (E^{\tr_K})^{\tr_K}\to \Hom_{\C_{\dg}(K)}(E^{\tr_K},M\ten_A E^{\tr_K})
\]
and there is a chain map
\[
\xymatrix@R=0.4pc{
M\ten_A E^{\tr_K}\ar[r] & \Hom_{\C_{\dg}(A)}(M,E)^{\tr_K},\\
m\ten f \ar@{|->}[r] & (g\mapsto f(g(m)))
}
\]
which is an isomorphism for $M=A$ and hence a quasi-isomorphism for $M\in \per(A)$.
Therefore, for $M\in\per(A)$, we have
\begin{align*}
\tw^-_E(M)&=\mathrm{Cone}(\mathbf{p}M{\longrightarrow}\Hom_{\C_{\dg}(A)}(\mathbf{p}M,E)^{\tr_K}\ten_K E)[-1]\\
&=\mathrm{Cone}(M\longrightarrow \bigoplus_{p\in\mathbb{Z}}\Hom_{\D(A)}(M,E[p])^{\tr_K}\ten_K E[p])[-1].
\end{align*}

Let $X=\TW^-_E(A)$. Then $X$ has the structure of a dg $A$-$A$-bimodule. The following result is an immediate consequence of Lemma~\ref{lem:turning-a-derived-functor-into-tensor} because $\tw^-_E$ commutes with infinite direct sums.

\begin{lemma}\label{lem:twist-functor-as-tensor-functor}
There is an isomorphism of triangle functors 
$$-\mathop{{\otimes}^\mathbf{L}_A}X\cong \tw^-_E(-):\D(A)\to\D(A).$$
\end{lemma}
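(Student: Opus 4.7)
The plan is to apply Lemma~\ref{lem:turning-a-derived-functor-into-tensor} in the special case $B=A$, taking $F$ to be the dg endofunctor $\TW^-_E\colon\C_{\dg}(A)\to\C_{\dg}(A)$ defined above. With this choice, the left derived functor $\mathbf{L}F$ is by definition $\tw^-_E$, and $F(A)=\TW^-_E(A)=X$ carries the dg $A$-$A$-bimodule structure described just before Lemma~\ref{lem:turning-a-derived-functor-into-tensor}, coming from the dg algebra map $A=\Hom_{\C_{\dg}(A)}(A,A)\to \Hom_{\C_{\dg}(A)}(F(A),F(A))$. The cited lemma then yields the desired natural isomorphism
\[
\tw^-_E(-)\;\cong\; -\mathop{{\otimes}^\mathbf{L}_A} X\colon \D(A)\to\D(A)
\]
of triangle endofunctors of $\D(A)$.

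The only hypothesis to verify is that $\mathbf{L}F=\tw^-_E$ commutes with arbitrary direct sums. This has already been recorded in the paragraph preceding the statement: by \cite[Proposition 5.3]{AL13}, $\tw^-_E$ is left adjoint to $\tw_E$, and any left adjoint between triangulated categories preserves all small coproducts. With this in hand, Lemma~\ref{lem:turning-a-derived-functor-into-tensor} applies directly.

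Since the statement is essentially a direct corollary, I do not expect a real obstacle. The real content lies upstream, in the derived-tensor identification of a dg functor provided by Lemma~\ref{lem:turning-a-derived-functor-into-tensor} and in the adjunction of Anno--Logvinenko that certifies the direct-sum hypothesis. The only minor subtlety is to check the conventions match: the left-derived functor $\mathbf{L}F$ is computed by first replacing the input by a $\KK$-projective resolution and then applying $F$, which is precisely the formula for $\tw^-_E(M)$ recorded above in terms of $\mathbf{p}M$. Hence there is no discrepancy between the two presentations of the functor, and the proof amounts to assembling these ingredients.
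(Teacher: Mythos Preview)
Your proposal is correct and follows exactly the paper's approach: the paper records this lemma as an immediate consequence of Lemma~\ref{lem:turning-a-derived-functor-into-tensor}, using that $\tw^-_E$ commutes with infinite direct sums (which was noted just above via \cite[Proposition~5.3]{AL13}). Your additional remarks about conventions are fine but not needed.
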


Let $d\geq 1$ be an integer. We say that $E$ is a \emph{$d$-spherical object} of $\D(A)$ if 
\begin{itemize}
\item[-] $\Hom_{\D(A)}(E,E[p])=\begin{cases} K & \text{if }p=0 \text{ or } d\\ 0 & \text{otherwise},\end{cases}$
\item[-] there is an isomorphism
\[
D\Hom_{\D(A)}(E,M)\cong \Hom_{\D(A)}(M,E[d])
\]
which is functorial in $M\in\D(A)$.
\end{itemize}

The following result follows from \cite[Theorem 5.1]{AL13}, see also \cite[Propsition 2.10]{ST01} and \cite[Lemma 3.1]{HKP16}.

\begin{proposition}\label{prop:twist-functor}
Assume that $E$ is a $d$-spherical object of $\D(A)$. Then $\tw^-_E$ is an autoequivalence of $\D(A)$ with quasi-inverse  the right derived functor $\tw_E$ of $\TW_E$.
\end{proposition}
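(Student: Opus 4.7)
My approach is to verify that the unit and counit of the adjunction $\tw_E^-\dashv\tw_E$, which is available from \cite[Proposition 5.3]{AL13} as recalled in the paragraph before the proposition, are natural isomorphisms. Once this is established, $\tw_E^-$ is an equivalence with $\tw_E$ as quasi-inverse, and both assertions of the proposition follow at once.

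By Lemma~\ref{lem:twist-functor-as-tensor-functor}, the functor $\tw_E^-$ is isomorphic to $-\mathop{{\otimes}^{\mathbf{L}}_A}X$ for $X=\TW_E^-(A)$, and in particular commutes with arbitrary direct sums. Its right adjoint $\tw_E$ likewise commutes with coproducts, because $E\in\per(A)$ is a compact object. It therefore suffices to verify that the unit $\eta\colon\id\to\tw_E\tw_E^-$ and counit $\varepsilon\colon\tw_E^-\tw_E\to\id$ are isomorphisms on the compact generator $A$ of $\D(A)$, hence by thickness on all of $\per(A)$.

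The essential computational input is the triangle
\[
\tw_E^-(M)\longrightarrow M\longrightarrow \bigoplus_{p\in\ZZ}\Hom_{\D(A)}(M,E[p])^{\tr_K}\ten_K E[p]\longrightarrow \tw_E^-(M)[1]
\]
valid for $M\in\per(A)$. Specialising to $M=E$, the spherical condition reduces the middle term to $E\oplus E[d]$, and the projection onto the first summand is a retraction of the coevaluation map, so the triangle splits and yields $\tw_E^-(E)\cong E[d-1]$. Specialising to any $M\in\per(A)$ with $\Hom_{\D(A)}(M,E[p])=0$ for all $p$, the middle term vanishes and $\tw_E^-(M)\cong M$. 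A dual calculation with $\TW_E$ gives $\tw_E(E)\cong E[1-d]$ and the identity on the same orthogonal class.

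The principal obstacle---which is the technical content of \cite[Theorem 5.1]{AL13} and of the original \cite[Proposition 2.10]{ST01}---is to promote these pointwise calculations into the naturality of $\eta$ and $\varepsilon$. The key tool is the Serre-type duality $D\Hom_{\D(A)}(E,-)\cong\Hom_{\D(A)}(-,E[d])$ built into the definition of a spherical object: it provides precisely the data needed to form an octahedron on the two defining cone triangles, cancel a contractible summand, and produce a functorial comparison $\tw_E\tw_E^-(M)\cong M$ at every $M\in\per(A)$. The coproduct-preserving property of $\tw_E^-$ and $\tw_E$ then extends these natural isomorphisms from $\per(A)$ to all of $\D(A)$, finishing the proof.
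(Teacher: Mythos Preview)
The paper does not give a proof of this proposition; it simply records that the result follows from \cite[Theorem~5.1]{AL13}, with additional pointers to \cite[Proposition~2.10]{ST01} and \cite[Lemma~3.1]{HKP16}. Your proposal is therefore not competing with an argument in the paper but rather expanding on the same citation.

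As a sketch of the standard argument your outline is broadly on target, and the computation $\tw_E^-(E)\cong E[d-1]$ is correct. There is, however, a mismatch between the strategy you announce and what you actually carry out. You say it suffices to check that the unit and counit are isomorphisms on the compact generator $A$, but you never perform that check: the calculations you do are on $E$ and on objects orthogonal to $E$, and knowing $\tw_E^-$ on these does not by itself determine the unit or counit at $A$ (nothing guarantees that $A$ lies in the thick subcategory generated by $E$ together with $E^\perp$). In your final paragraph you then change tack and defer the genuine work---showing $\eta_M$ and $\varepsilon_M$ are isomorphisms for arbitrary $M\in\per(A)$ via the octahedron/cancellation argument---back to \cite[Theorem~5.1]{AL13} and \cite[Proposition~2.10]{ST01}.

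So in the end your proposal, like the paper, rests on the cited references for the substantive step. What you have added is useful motivation for why the spherical hypothesis is precisely what makes that cancellation go through, but as written it is an annotated citation rather than an independent proof.
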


%In the situation of Proposition~\ref{prop:twist-functor}, we have an isomorphism of triangle functors $\tw^-_E\cong -\mathop{{\otimes}^\mathbf{L}_A} \RHom_A(X,A)$ by \cite[Lemma 6.2]{Ke1}.
%%%%%%%%%%%%%%%%%%%%%%%%%%%%%%%%%%%%%%%%%%%%%%%%%%%%%%%%
%%%%%%%%%%%%%%%%%%%%%%%%%%%%%%%%%%%%%%%%%%%%%%%%%%%%%%%%
%%%%%%%%%%%%%%%%%%%%%%%%%%%%%%%%%%%%%%%%%%%%%%%%%%%%%%%%%%%%%%%%%%%%%%%%%%%%%%%%%%%%%%%%%%%%%%%%%

\subsection{Derived preprojective algebras}
\label{ss:derived-preprojective-algebra}
In this subsection we recall the definition of derived preprojective algebras from \cite{Ke2}.

\smallskip
Let $Q$ be a finite quiver. Define a graded quiver $\tilde{Q}$ by
\begin{itemize}
\item[-] $\tilde{Q}$ has the same vertices as $Q$;
\item[-] $\tilde{Q}$ has three types of arrows:
\begin{itemize}
\item[$\cdot$] the arrows of $Q$, in degree $0$,
\item[$\cdot$] $\alpha^*:j\to i$ in degree $0$, for each arrow $\alpha: i\to j$ of $Q$,
\item[$\cdot$] $t_i:i\to i$ in degree $-1$, for each vertex $i$ of $Q$.
\end{itemize}
\end{itemize}
The \emph{derived preprojective algebra} $\Gamma:=\Gamma(Q)$ is the dg algebra $(K\tilde{Q},d)$, where $K\tilde{Q}$ is the graded path algebra of $\tilde{Q}$ and $d$ is the unique $K$-linear differential which satisfies the graded Leibniz rule
\[
d(ab)=d(a)b+(-1)^{p}ad(b),
\]
where $a$ is homogeneous of degree $p$, and which takes the following values
\begin{itemize}
\item[$\cdot$] $d(e_i)=0$ for any vertex $i$ of $Q$, where $e_i$ is the trivial path at $i$,  
\item[$\cdot$] $d(\alpha)=0$ for any arrow $\alpha$ of $Q$,
\item[$\cdot$] $d(\alpha^*)=0$ for any arrow $\alpha^*$ of $Q$,
\item[$\cdot$] $d(t_i)=e_i\sum_\alpha (\alpha\alpha^*-\alpha^*\alpha)e_i$ for any vertex $i$ of $Q$, where in the summation $\alpha$ runs over all arrows of $Q$. 
\end{itemize}
Note that $H^{0}(\Gamma)$ is the ordinary preprojective algebra associated with $Q$. When $Q$ is Dynkin, this dg algebra appears in the derived McKay correspondence for Kleinian singularities, see for example \cite[Section 7.1]{KalckYang16}.

According to \cite[Theorem 6.3]{Ke2}, $\Gamma$ is homologically smooth and $2$-Calabi--Yau as a bimodule, \emph{i.e.}  $A\in\per(A^{op}\ten_K A)$ and there is an isomorphism $\RHom_{A^{op}\ten_K A}(A,A^{op}\ten_K A)\cong A[-2]$ in $\per(A^{op}\ten_K A)$. Thus, by \cite[Lemma 4.1]{Ke3}  and its proof, $\per(\Gamma)\supseteq \Dfd(\Gamma)$ and there is a functorial isomorphism for $X\in\Dfd(\Gamma)$ and $Y\in\D(\Gamma)$
\begin{align}
D\Hom(X,Y)\simeq \Hom(Y,X[2]). \label{eq:cy-property-of-derived-preprojective-algebra}
\end{align}
Note that since $\Gamma$ is concentrated in non-positive degrees, $\Gamma$ is a silting object of $\per(\Gamma)$ by Lemma~\ref{lem:non-positive-dg-algebra-and-silting}.

%%%%%%%%%%%%%%%%%%%%%%%%%%%%%%%%%%%%%%%%%%%%%%%%%%%%%%%%%%%%%%%%%%%%%%%%%%%%
\section{Main results}
\label{s:main-results}

Let $Q$ be an acyclic quiver and $\Gamma:=\Gamma(Q)$ be the derived preprojective algebra of $Q$ defined in Section~\ref{ss:derived-preprojective-algebra}. In this section we will study silting objects over $\Gamma$, especially we will prove Theorem~\ref{thm:main}.

\subsection{Twist functors and dg ideals}
\label{ss:twist-functors-vs-dg-ideals}

Let $i$ be a vetex of $Q$. Consider the dg ideal of $\Gamma$ (a dg ideal is a graded ideal closed under the differential)
\[
I_i:=I_i^+=\Gamma(1-e_i)\Gamma+\Gamma t_i\Gamma,
\] 
and let $I_i^-:=\Hom_{\C_{\dg}(\Gamma)}(I_i,\Gamma)$, which is a dg $\Gamma$-$\Gamma$-bimodule. Let $S_i=\Gamma/I_i$, which is 1-dimensional due to the fact that $Q$ has no loops, concentrated in degree $0$ and supported at $i$, and let $\tw^-_i=\tw^-_{S_i}:\D(\Gamma)\to\D(\Gamma)$ be the corresponding dual twist functor (Section~\ref{sss:twist-funtors}). 

\begin{theorem}\label{main1}
$S_i$ is $2$-spherical object of $\D(\Gamma)$. Moreover, 
there is an isomorphism of triangle autoequivalences of $\D(\Gamma)$
$$-\Lotimes I_i\cong\tw^-_i(-),$$
which restrict to isomorphic autoequivalences of $\per(\Gamma)$.
\end{theorem}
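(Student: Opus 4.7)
The theorem has two claims: that $S_i$ is a $2$-spherical object of $\D(\Gamma)$, and that $-\Lotimes I_i \cong \tw^-_i$ as triangle autoequivalences. The plan is to treat them in that order.

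For sphericity, the functorial duality $D\Hom_{\D(\Gamma)}(S_i, M) \cong \Hom_{\D(\Gamma)}(M, S_i[2])$ is immediate from the bimodule $2$-Calabi--Yau property \eqref{eq:cy-property-of-derived-preprojective-algebra}, applied to $S_i \in \Dfd(\Gamma)$. For the Ext computation, I would build an explicit $\KK$-projective resolution of $S_i$ directly from the generators-and-relations presentation of $\Gamma$. Because $Q$ has no loops at $i$, this resolution is a three-term complex of the form
\[
e_i\Gamma \longrightarrow \bigoplus_{\alpha\in Q_1,\ s(\alpha)=i\ \text{or}\ t(\alpha)=i} e_{j(\alpha)}\Gamma \longrightarrow e_i\Gamma
\]
in cohomological degrees $-2,-1,0$, whose differentials encode the arrows $\alpha,\alpha^*$ and the loop $t_i$ of degree $-1$. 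Applying $\Hom_{\D(\Gamma)}(-,S_i[p])$ and using $\Hom_{\D(\Gamma)}(e_k\Gamma,S_i[p]) = H^p(S_i e_k) = \delta_{k,i}\delta_{p,0}\,K$ then yields $\Hom^p_{\D(\Gamma)}(S_i,S_i)=K$ for $p=0,2$ and $0$ otherwise; the vanishing at $p=1$ uses precisely the absence of loops in $Q$, which removes any $e_i\Gamma$-summand from the middle term.

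For the functor isomorphism, the plan is to identify the dg $\Gamma$-bimodule $X := \TW^-_{S_i}(\Gamma)$ with $I_i$ up to quasi-isomorphism. Since $\Gamma$ is $\KK$-projective and $\Hom_{\D(\Gamma)}(\Gamma,S_i[p])$ is $K$ for $p=0$ and $0$ otherwise, the defining coevaluation formula collapses, after the canonical identification $\Hom_{\C_{\dg}(K)}(S_i^{\tr_K},S_i^{\tr_K}) \cong K \cong S_i$, to
\[
X \;\simeq\; \mathrm{Cone}\bigl(\Gamma \xrightarrow{\pi} S_i\bigr)[-1]
\]
as dg $\Gamma$-bimodules, where $\pi$ is the quotient. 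The short exact sequence $0\to I_i \to \Gamma \xrightarrow{\pi} S_i \to 0$ of dg bimodules then produces a bimodule quasi-isomorphism $I_i \xrightarrow{\sim} X$. Since $\tw^-_i$ commutes with infinite direct sums (by the left adjointness recorded in Section~\ref{sss:twist-funtors}), Lemma~\ref{lem:turning-a-derived-functor-into-tensor} applies and gives $\tw^-_i(-) \cong -\Lotimes X \cong -\Lotimes I_i$. That this endofunctor is an autoequivalence of $\D(\Gamma)$ is then Proposition~\ref{prop:twist-functor}, and its restriction to $\per(\Gamma)$ is again an autoequivalence because triangle equivalences preserve compact objects.

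The main technical obstacle, in my view, is showing that the identification $I_i \simeq X$ is a quasi-isomorphism of dg $\Gamma$-\emph{bi}modules, not merely of right dg modules. This amounts to unpacking the dg functoriality of $\TW^-_{S_i}$ applied to $\Gamma$ with its natural left $\Gamma$-action and checking that, through the identification above, the coevaluation really does become the bimodule quotient $\pi$. The Ext computation for $S_i$ is the other place requiring care, in setting up the signs and differentials of the three-term resolution correctly, though it is essentially standard for $2$-CY completions of acyclic quiver algebras.
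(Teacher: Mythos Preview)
Your proposal is correct and follows essentially the same route as the paper: the paper also uses the relative $2$-Calabi--Yau duality \eqref{eq:cy-property-of-derived-preprojective-algebra} together with the explicit three-term $\KK$-projective resolution of $S_i$ (recorded as Proposition~\ref{exact seq}) to verify sphericity, and then identifies $\TW^-_{S_i}(\Gamma)=\mathrm{Cone}(\Gamma\to S_i)[-1]$ directly at the dg-bimodule level before invoking Lemma~\ref{lem:twist-functor-as-tensor-functor} and the short exact sequence $0\to I_i\to\Gamma\to S_i\to 0$. The only cosmetic difference is that the paper treats the bimodule identification you flag as the ``main technical obstacle'' by a one-line equality using that $S_i$ is $1$-dimensional, rather than by passing through $\Hom_{\D(\Gamma)}(\Gamma,S_i[p])$.
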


To prove Theorem \ref{main1} we need some preparation.

Put $R_i=\bigoplus_{\alpha\in Q_1:t(\alpha)=i} e_{s(\alpha)}\Gamma\oplus \bigoplus_{\beta\in Q_1:s(\beta)=i}e_{t(\beta)}\Gamma$, where $Q_1$ is the set of arrows of $Q$, and for an arrow $\rho$ we denote by $s(\rho)$ and $t(\rho)$ its source and target, respectively.  For $\alpha\in Q_1$ with $t(\alpha)=i$, let $\gamma_\alpha$ be the element $e_{s(\alpha)}$ in the direct summand $e_{s(\alpha)}\Gamma$ of $R_i$ corresponding to $\alpha$; and for $\beta\in Q_1$ with $s(\beta)=i$, let $\gamma_\beta$ be the element $e_{t(\beta)}$ in the direct summand $e_{t(\beta)}\Gamma$ of $R_i$ corresponding to $\beta$. Then 
\begin{align*}
f&:e_i\Gamma \to R_i, a\mapsto (\sum_{\alpha\in Q_1: t(\alpha)=i}\gamma_\alpha \alpha^*-\sum_{\beta\in Q_1:s(\beta)=i}\gamma_\beta \beta)a \text{ and }\\
g&:R_i  \to e_i\Gamma, \sum_{\alpha\in Q_1: t(\alpha)=i} \gamma_\alpha a_{\alpha}+\sum_{\beta\in Q_1:s(\beta)=i} \gamma_\beta b_\beta\mapsto \sum_{\alpha} \alpha a_{\alpha}+\sum_\beta \beta^*b_\beta
\end{align*}
are left $(\add((1-e_{i})\Ga))$-approximation and right $(\add((1-e_{i})\Ga))$-approximation of $e_i\Gamma$ in $\per(\Gamma)$, respectively. 
Define $t: e_i\Gamma[1]\to e_i\Gamma,~~a\mapsto t_ia$, which is homogeneous of degree $0$. We also freely view it as a morphism $e_i\Gamma[2]\to e_i\Gamma$ of degree $1$.

Let $L:=\mathrm{Cone}(\xymatrix@C20pt@R10pt{\hspace{-3pt}e_i\Gamma\ar[r]^{f}& R_i)}$. Then we have the following result. 

\begin{proposition}\label{exact seq}
There is an exact sequence of dg $\Gamma$-modules 
\[\xymatrix@C30pt@R10pt{
0 \ar[r]& L\ar[r]^{\left(\begin{smallmatrix}t\\g\end{smallmatrix}\right)} & e_i\Gamma\ \ar[r]&S_i\ar[r] &0, }\]
where the second homomorphism is the canonical projection from $e_i\Gamma$ onto $S_i$. 
As a consequence, the right dg $\Gamma$-module
\[
\mathbf{p}S_i=e_i\Gamma[2] \oplus R_i[1]\oplus e_i\Gamma
\]
with differential
\[
\begin{pmatrix}
d_{e_i\Gamma[2]} & 0 & 0\\
-f & d_{R_i[1]} & 0\\
t & g & d_{e_i\Gamma}
\end{pmatrix}
\]
is a $\KK$-projective resolution of $S_i$ over $\Gamma$. 
\end{proposition}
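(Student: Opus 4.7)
The plan is to first establish the short exact sequence of dg modules, and then deduce the $\KK$-projective resolution of $S_i$ as its mapping cone. On the underlying graded module $L=e_i\Ga[1]\oplus R_i$, the map $\binom{t}{g}\colon L\to e_i\Ga$ is well-defined by $(a,r)\mapsto t_ia+g(r)$, so the first task is to check that it is a closed morphism in $\C_{\dg}(\Ga)$.

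The maps $f$ and $g$ are right $\Ga$-linear and given by left multiplication by closed elements of $\Ga$ (since $d(\alpha)=d(\alpha^{*})=0$ for every arrow), hence are chain maps. For $\binom{t}{g}$, after expanding the cone differential of $L=\mathrm{Cone}(f)$ and using $d(t_ia)=d(t_i)a-t_id(a)$ (from $|t_i|=-1$), the chain-map condition reduces to the key identity
\[ gf=d(t_i)\cdot\qquad\text{as left-multiplication operators on }e_i\Ga. \]
This is a one-line computation:
\[ g(f(a))=\Bigl(\sum_{t(\alpha)=i}\alpha\alpha^{*}-\sum_{s(\beta)=i}\beta^{*}\beta\Bigr)a, \]
which matches $d(t_i)a$ by the very definition of the differential on $\Ga$ --- this is where the preprojective relations enter.

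The next step is to show exactness of graded right $\Ga$-modules. Since $\ker(e_i\Ga\twoheadrightarrow S_i)=e_iI_i$, this amounts to verifying that $\binom{t}{g}$ is injective with image $e_iI_i$. For injectivity, I would use that the underlying graded algebra of $\Ga$ is the free path algebra $K\tilde Q$: paths are linearly independent, each one factors uniquely by its leftmost arrow, and left multiplication by a fixed arrow is injective. Every path in $t_i\cdot e_i\Ga$ begins with the degree $-1$ arrow $t_i$, whereas every path in $g(R_i)=\sum_{t(\alpha)=i}\alpha\Ga+\sum_{s(\beta)=i}\beta^{*}\Ga$ begins with a degree $0$ arrow; combined with the injectivity of each individual left multiplication, this yields injectivity of $\binom{t}{g}$. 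For the image equality, the inclusion $\im\binom{t}{g}\subseteq e_iI_i$ is immediate since $t_i,\alpha,\beta^{*}\in I_i$; for the reverse, I would decompose every path in $e_iI_i=e_i\Ga(1-e_i)\Ga+e_i\Ga t_i\Ga$ by its leftmost arrow, which, since $\tilde Q$ has no other arrows ending at $i$, must be one of $t_i$, some $\alpha$ with $t(\alpha)=i$, or some $\beta^{*}$ with $s(\beta)=i$, placing the path in the corresponding summand of the image.

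Finally, $e_i\Ga$ and each direct summand of $R_i$ are $\KK$-projective as summands of the free dg module $\Ga$, and the class of $\KK$-projective modules is closed under shifts and mapping cones. Hence both $L$ and $\mathrm{Cone}\bigl(\binom{t}{g}\bigr)=L[1]\oplus e_i\Ga$ are $\KK$-projective; the standard cone differential unfolds precisely to the displayed $3\times 3$ matrix, and the short exact sequence established above identifies this cone as a resolution of $S_i$, yielding the $\KK$-projective resolution $\mathbf{p}S_i$. The main obstacle I anticipate is the careful bookkeeping of signs in the cone differentials (in particular the $-f$ entry), which is essentially a convention-tracking exercise; the only substantive input is the identity $gf=d(t_i)\cdot$ coming from the preprojective relations encoded in $d(t_i)$.
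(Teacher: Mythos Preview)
Your proposal is correct and follows essentially the same approach as the paper. The paper phrases the argument by first naming the kernel $L'\subset e_i\Gamma$, writing it explicitly as $t_i\Gamma\oplus\bigoplus_{t(\alpha)=i}\alpha\Gamma\oplus\bigoplus_{s(\beta)=i}\beta^*\Gamma$, and then exhibiting an isomorphism $L'\to L$ whose composite with $\binom{t}{g}$ is the inclusion; you instead verify directly that $\binom{t}{g}$ is a chain map (via the identity $gf=d(t_i)\cdot$) and is injective with image $e_iI_i$, which is the same content organized in the opposite direction. Your added explicit check of the chain-map condition is a nice touch that the paper leaves implicit in the phrase ``straightforward to check''.
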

\begin{proof}
Let $L'$ be the kernel of the canonical projection from $e_i\Gamma$ to $S_i$. Then $L'$ is closed under the differential of $e_i\Gamma$ and as a graded $\Gamma$-submodule of $e_i\Gamma$ it is 
\[
L'=t_i\Gamma\oplus\bigoplus_{\alpha:t(\alpha)=i}\alpha\Gamma\oplus\bigoplus_{\beta:s(\beta)=i}\beta^*\Gamma.
\]
It is straightforward to check that the assignment $t_i\mapsto e_i$, $\alpha\mapsto e_{s(\alpha)}$, $\beta^*\mapsto e_{t(\beta)}$ extends to an isomorphism of right dg $\Gamma$-modules $L'\to L$. Moreover, the composition of this isomorphism with the homomorphism ${t\choose g}:L\to e_i\Gamma$ is exactly the inclusion $L'\to e_i\Gamma$.
 
Now the second statement follows from the first one since the given right dg module $\mathbf{p}S_i$ is exactly the mapping cone of $t\choose g$, which is $\KK$-projective since $\Gamma$ is $\KK$-projective and the property of being $\KK$-projective is closed under taking direct sums, direct summands, shifts and mapping cones.
\end{proof}

Now we are ready to give a proof of Theorem \ref{main1}.

\begin{proof}[Proof of  Theorem~\ref{main1}]
Direct computation using and Proposition \ref{exact seq}, the formula~\eqref{eq:Hom-space-in-derived-category} and the equality $\Hom_{\C_{\dg}(A)}(e_j\Gamma,M)=Me_j$ for any $j\in Q_0$ and any dg $\Gamma$-module $M$
shows that $$\Hom_{\D(A)}(S_i,S_i[p])=H^p\Hom_{\C_{\dg}(A)}(\mathbf{p}S_i,S_i)$$ is $K$ if $p=2$ and $p=0$, and is trivial otherwise. 
Therefore, by the relative Calabi--Yau property \eqref{eq:cy-property-of-derived-preprojective-algebra} we see that $S_i$ is a $2$-spherical object of $\D(\Gamma)$, so by Proposition~\ref{prop:twist-functor}, $\tw^-_i:\D(\Gamma)\to\D(\Gamma)$ is a triangle equivalence.
It restricts to an autoequivalence of $\per(\Gamma)$.

By the definition of $S_i$, there is a short exact sequence of dg $\Gamma$-$\Gamma$-bimodules
\[
0\to I_i\to \Gamma\to S_i\to 0.
\]
Therefore as a dg $\Gamma$-$\Gamma$-bimodule, $I_i$ is quasi-isomorphic to $X=\mathrm{Cone}(\Gamma\to S_i)[-1]$. So there is an isomorphism of triangle functors $-\Lotimes I_i\cong -\Lotimes X:\D(\Gamma)\to\D(\Gamma)$. 
Since $\TW^-_{S_i}(\Gamma)=\mathrm{Cone}(\Gamma\overset{\rm coev}{\longrightarrow}\Hom_{\C_{\dg}(K)}(S_i^{\tr_K},\Gamma\otimes_\Gamma S_i^{\tr_K}))[-1]=\mathrm{Cone}(\Gamma\to S_i)[-1]=X$, Lemma~\ref{lem:twist-functor-as-tensor-functor} implies $\tw^-_i(-)
\cong -\Lotimes X$. 
Thus, this gives the desired isomorphism of triangle functors.
\end{proof}

\subsection{Silting mutations and dg ideals}
\label{ss:silting-mutations-vs-dg-ideals}

There is a decomposition $\Gamma=\bigoplus_i e_i\Gamma$ of the right dg $\Gamma$-module $\Gamma$ into the direct sum of indecomposable direct summands, where $i$ runs over all vertices of $Q$. Let $i$ be a vertex of $Q$. 
Define the left mutation 
$$\mu_{i}^\Le(\Ga)=\mu_{e_i\Gamma}^\Le(\Ga)=\mathrm{Cone}(
{e_{i}\Ga}\stackrel{f}{\to}R_i)\oplus(1-e_{i})\Ga,$$ 
and the right mutation
$$\mu_{i}^\Ri(\Ga)=\mu_{e_i\Gamma}^\Ri(\Ga)=\mathrm{Cone}(
R_i\stackrel{g}{\to}{e_{i}\Ga})[-1]\oplus(1-e_{i})\Ga,$$ 
where $f$ and g are the $(\add((1-e_{i})\Ga))$-approximations of $e_i\Gamma$ defined in the beginning of Section~\ref{ss:twist-functors-vs-dg-ideals}.

\begin{lemma}\label{mutation and ideal}
As right dg $\Gamma$-modules, we have 
$$I_i\cong\mu_i^\Le(\Gamma) \text{ and } I_i^-\cong\mu_i^\Ri(\Gamma).$$ 
\end{lemma}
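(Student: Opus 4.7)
The plan is to treat the two isomorphisms separately. The first is rather direct, while the second requires more care.

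For $I_i\cong \mu_i^\Le(\Gamma)$, I would decompose $I_i$ as a right dg $\Gamma$-module using the bimodule idempotent $e_i$: $I_i = e_iI_i \oplus (1-e_i)I_i$. Since $(1-e_i)=\sum_{j\ne i}e_j$ lies in $I_i$, we have $(1-e_i)I_i=(1-e_i)\Gamma$. The other summand $e_iI_i$ is the kernel of the canonical projection $e_i\Gamma\twoheadrightarrow S_i$, which by Proposition \ref{exact seq} is identified with $L=\mathrm{Cone}(e_i\Gamma\xrightarrow{f}R_i)$ via $\binom{t}{g}$. Combining these, $I_i\cong L\oplus(1-e_i)\Gamma=\mu_i^\Le(\Gamma)$.

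For $I_i^-\cong\mu_i^\Ri(\Gamma)$, my strategy is to apply $\Hom_{\C_\dg(\Gamma)}(-,\Gamma)$ to the decomposition above and identify the pieces. As a graded $K$-vector space this gives $I_i^-\cong \Hom(L,\Gamma)\oplus \Hom((1-e_i)\Gamma,\Gamma)$; the second summand is $\Gamma(1-e_i)$, and the first, via the cone presentation of $L$, equals $\mathrm{Cone}(f^\vee\colon R_i^\vee\to \Gamma e_i)[-1]$, where $f^\vee$ is the dual of $f$. One then matches this with $\mu_i^\Ri(\Gamma)=(1-e_i)\Gamma\oplus\mathrm{Cone}(R_i\xrightarrow{g}e_i\Gamma)[-1]$ by using that $f^\vee$ and $g$ are related via the natural anti-involution of $\Gamma$ exchanging $\alpha$ and $\alpha^*$.

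The main obstacle is that the right dg $\Gamma$-module structure on $I_i^-$ is induced by the \emph{left} $\Gamma$-action on $I_i$ via $(\phi\lambda)(x)=\phi(\lambda x)$, and this does not respect the above right-module decomposition of $I_i$; for instance, right multiplication by an arrow $\alpha^*\colon j\to i$ shuttles elements of $\Hom((1-e_i)\Gamma,\Gamma)$ into $\Hom(L,\Gamma)$. So the displayed direct sum decomposition of $I_i^-$ is only a decomposition as a graded $K$-vector space (or as a left $\Gamma$-module), not as a right $\Gamma$-module, and producing the identification with $\mu_i^\Ri(\Gamma)$ requires exploiting the bimodule structure of $I_i$ more carefully — effectively, combining the left and right idempotent decompositions of $I_i$ to read off a direct sum decomposition of $I_i^-$ as a right dg $\Gamma$-module that matches $\mu_i^\Ri(\Gamma)$. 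A cross-check is provided by Theorem \ref{main1} and Lemma \ref{lem:derived-tensor-functor}(b), which realize $I_i^-$ as $\tw_i(\Gamma)$ in $\D(\Gamma)$ — this gives the isomorphism in the derived category, which must then be upgraded to an isomorphism of dg modules by matching representatives.
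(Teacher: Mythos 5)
Your argument for the first isomorphism $I_i\cong\mu_i^\Le(\Gamma)$ coincides with the paper's: decompose $I_i=e_iI_i\oplus(1-e_i)\Gamma$ as right dg $\Gamma$-modules, observe that $e_iI_i$ is the kernel of $e_i\Gamma\twoheadrightarrow S_i$, and identify it with $L$ via Proposition~\ref{exact seq}. That half is correct.

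For the second isomorphism there is a genuine gap, and you have in fact put your finger on it yourself: the right dg $\Gamma$-module structure on $I_i^-=\Hom_{\C_{\dg}(\Gamma)}(I_i,\Gamma)$ is induced by the \emph{left} action on $I_i$, so dualizing the right-module decomposition $I_i\cong L\oplus(1-e_i)\Gamma$ only yields a decomposition of $I_i^-$ as a graded vector space (or left module), and your proposed repair --- ``combining the left and right idempotent decompositions'' --- is never carried out. The paper's resolution is to first prove the mirror image of the first statement: there is an isomorphism of \emph{left} dg $\Gamma$-modules $I_i\cong\mathrm{Cone}(\Gamma e_i\xrightarrow{f'}R'_i)\oplus\Gamma(1-e_i)$, where $R'_i=\bigoplus_{t(\alpha)=i}\Gamma e_{s(\alpha)}\oplus\bigoplus_{s(\beta)=i}\Gamma e_{t(\beta)}$ and $f'$ is the explicit left approximation $a\mapsto a(\sum_\alpha\alpha\gamma'_\alpha+\sum_\beta\beta^*\gamma'_\beta)$, i.e.\ the left-module analogue of Proposition~\ref{exact seq} applied to $I_ie_i=\ker(\Gamma e_i\to S_i)$. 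Dualizing \emph{this} presentation into $\Gamma$ converts the left projectives $\Gamma e_j$ into the right projectives $e_j\Gamma$, so the output carries the correct right dg module structure, and a direct computation identifies the dual of $f'$ with $g$, giving $I_i^-\cong\mathrm{Cone}(R_i\xrightarrow{g}e_i\Gamma)[-1]\oplus(1-e_i)\Gamma=\mu_i^\Ri(\Gamma)$ on the nose. This left-sided presentation and the identification of $\Hom_{\C_{\dg}(\Gamma)}(f',\Gamma)$ with $g$ are exactly what your proposal is missing; the anti-involution you invoke does not substitute for it, since it relates $\Gamma$ to $\Gamma^{\op}$ rather than producing the required right-module decomposition of $I_i^-$. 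Finally, the cross-check via Theorem~\ref{main1} and Lemma~\ref{lem:derived-tensor-functor}(b) only produces an isomorphism in $\D(\Gamma)$; between $\KK$-projective objects this upgrades to a homotopy equivalence, not to the isomorphism of dg modules that the lemma asserts, and ``matching representatives'' is not an argument.
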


\begin{proof}
As a right dg $\Gamma$-module, we have 
$$I_i=e_iI_i\oplus (1-e_i)\Gamma.$$ 
On the other hand, there is a short exact sequnece of right dg $\Ga$-modules
$0\to e_iI_i\to e_i\Gamma\to S_i\to 0.$
Thus, Proposition \ref{exact seq} shows $e_iI_i\cong \mathrm{Cone}(\xymatrix@C20pt@R10pt{\hspace{-3pt}e_i\Gamma\ar[r]^{f}& R_i)}$ and we obtain the desired result for $I_i$. Similarly, there is an isomorhpism of left dg $\Gamma$-modules
\[
I_i\cong \mathrm{Cone}(\xymatrix@C20pt@R10pt{\hspace{-3pt}\Gamma e_i\ar[r]^{f'}& R'_i)}\oplus\Gamma(1-e_i),
\] 
where $R'_i=\bigoplus_{\alpha\in Q_1:t(\alpha)=i} \Gamma e_{s(\alpha)}\oplus \bigoplus_{\beta\in Q_1:s(\beta)=i}\Gamma e_{t(\beta)}$ and $f'$ is the homomorphism 
\[
f':\Gamma e_i \to R'_i, a\mapsto a(\sum_{\alpha\in Q_1:t(\alpha)=i}\alpha\gamma'_\alpha+\sum_{\beta\in Q_1:s(\beta)=i}\beta^*\gamma'_\beta).
\]
Here $\gamma'_\alpha$ is the element $e_{s(\alpha)}$ in the direct summand $\Gamma e_{s(\alpha)}$ of $R'_i$ corresponding to $\alpha\in Q_1$ with $t(\alpha)=i$ and $\gamma'_\beta$ is the element $e_{t(\beta)}$ in the direct summand $\Gamma e_{t(\beta)}$ of $R'_i$ corresponding to $\beta\in Q_1$ with $s(\beta)=i$. 
It is easily seen that $\Hom_{\C_{\dg}(\Gamma)}(f',\Gamma)\cong g$. Consequently,
\[
I_i^-=\Hom_{\C_{\dg}(\Gamma)}(I_i,\Gamma)\cong\mathrm{Cone}(
R_i\stackrel{g}{\to}{e_{i}\Ga})[-1]\oplus(1-e_{i})\Ga=\mu_{i}^\Ri(\Ga).\qedhere
\]
\end{proof}

Then we obtain the following corollary.

\begin{corollary}
\label{cor:projectivity-of-I}
\begin{itemize}
\item[(1)] $I_i$ and $I_i^-$ are $\KK$-projective both as right dg $\Gamma$-modules and as left dg $\Gamma$-modules.
\item[(2)] $-\Lotimes I_i^-$ is a quasi-inverse of $-\Lotimes I_i$. 
\item[(3)] There are isomorphisms of triangle functors $-\Lotimes I_i\cong -\ten_\Gamma I_i$ and $-\Lotimes I_i^-\cong -\ten_\Gamma I_i^-$. 
\item[(4)] There are isomorphisms $I_i^-\ten_\Gamma I_i\cong\Gamma\cong I_i\ten_\Gamma I_i^-$ in $\D(\Gamma^{op}\ten_K \Gamma)$.
\end{itemize}
\end{corollary}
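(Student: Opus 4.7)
The plan is to use Lemma~\ref{mutation and ideal} (and the computations in its proof) as the engine for part~(1), after which parts~(2), (3), and (4) follow formally from Theorem~\ref{main1} and Lemma~\ref{lem:derived-tensor-functor}. For~(1), Lemma~\ref{mutation and ideal} presents $I_i\cong\mu_i^\Le(\Ga)$ and $I_i^-\cong\mu_i^\Ri(\Ga)$ as right dg $\Gamma$-modules, each constructed from indecomposable summands of $\Gamma$ by a shift, one mapping cone, and a direct sum with $(1-e_i)\Gamma$. Since $\Gamma$ is $\KK$-projective as a right dg $\Gamma$-module and this property is stable under direct sums, direct summands, shifts, and mapping cones, both $I_i$ and $I_i^-$ are $\KK$-projective on the right. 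The proof of Lemma~\ref{mutation and ideal} also furnishes an isomorphism of left dg $\Gamma$-modules $I_i\cong\mathrm{Cone}(\Gamma e_i\to R'_i)\oplus\Gamma(1-e_i)$, giving $\KK$-projectivity of $I_i$ on the left. For $I_i^-$ on the left, I would apply $\Hom_{\C_{\dg}(\Gamma)}(-,\Gamma)$ to the right-dg presentation of $I_i$ and use the identity $\Hom_{\C_{\dg}(\Gamma)}(e_j\Gamma,\Gamma)=\Gamma e_j$ to produce a presentation $I_i^-\cong\mathrm{Cone}(R'_i\to\Gamma e_i)[-1]\oplus\Gamma(1-e_i)$ as a left dg $\Gamma$-module, which is again $\KK$-projective by the same stability.

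For~(2), Theorem~\ref{main1} identifies $-\Lotimes I_i\cong\tw^-_i$ as a triangle autoequivalence of $\D(\Gamma)$; applying Lemma~\ref{lem:derived-tensor-functor}(b) to $X=I_i$, which is right-$\KK$-projective by~(1), gives that the quasi-inverse is $-\Lotimes\Hom_{\C_{\dg}(\Gamma)}(I_i,\Gamma)=-\Lotimes I_i^-$. For~(3), since~(1) yields left-$\KK$-projectivity of both $I_i$ and $I_i^-$, Lemma~\ref{lem:derived-tensor-functor}(a) directly produces the isomorphisms of triangle functors $-\Lotimes I_i\cong-\ten_\Gamma I_i$ and $-\Lotimes I_i^-\cong-\ten_\Gamma I_i^-$.

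For~(4), by~(2) the composition $(-\Lotimes I_i^-)\circ(-\Lotimes I_i)\cong -\Lotimes(I_i\Lotimes I_i^-)$ is naturally isomorphic to the identity functor on $\D(\Gamma)$; evaluating at $\Gamma$ and using~(1) together with Lemma~\ref{lem:derived-tensor-functor}(a) to convert $\Lotimes$ into $\ten_\Gamma$ gives $I_i\ten_\Gamma I_i^-\cong\Gamma$ in $\D(\Gamma)$. This isomorphism is realized by the natural evaluation map $\mathrm{ev}\colon I_i\ten_\Gamma I_i^-\to\Gamma$, $x\ten\phi\mapsto\phi(x)$, which is a closed degree-zero morphism of dg $\Gamma$-$\Gamma$-bimodules; since the forgetful functor $\D(\Gamma^\op\ten_K\Gamma)\to\D(\Gamma)$ reflects quasi-isomorphisms, $\mathrm{ev}$ is an isomorphism already in $\D(\Gamma^\op\ten_K\Gamma)$. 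The companion isomorphism $I_i^-\ten_\Gamma I_i\cong\Gamma$ is obtained by reversing the order of the two functors in the composition. The main obstacle lies in~(1): identifying the left dg $\Gamma$-module structure on $I_i^-$ by an explicit $\KK$-projective presentation requires careful tracking of the right-left swap under $\Hom_{\C_{\dg}(\Gamma)}(-,\Gamma)$; once this is in place, the remaining parts fall out formally.
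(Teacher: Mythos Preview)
Your treatment of (1)--(3) matches the paper's proof essentially verbatim: (1) from Lemma~\ref{mutation and ideal} (and its proof for the left-module side), then (2) and (3) from Lemma~\ref{lem:derived-tensor-functor}. Nothing to add there.

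In (4), however, your explicit map is not well-defined. With $I_i^-=\Hom_{\C_{\dg}(\Gamma)}(I_i,\Gamma)$ carrying the bimodule structure $(a\cdot\phi\cdot b)(x)=a\,\phi(bx)$, the natural evaluation $\phi\ten x\mapsto\phi(x)$ is $\Gamma$-balanced only in the order $I_i^-\ten_\Gamma I_i\to\Gamma$: indeed $(\phi\cdot a)(x)=\phi(ax)$ agrees with $\phi(a\cdot x)$. Your map $\mathrm{ev}\colon I_i\ten_\Gamma I_i^-\to\Gamma$, $x\ten\phi\mapsto\phi(x)$, sends $(xa)\ten\phi$ to $\phi(x)a$ but $x\ten(a\phi)$ to $a\phi(x)$, so it does not descend to the tensor product over $\Gamma$. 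Moreover, one should say \emph{why} the abstract functorial isomorphism is induced by a specific bimodule map: the paper does this by identifying the counit of the adjoint pair $(-\ten_\Gamma I_i,\,-\ten_\Gamma I_i^-)$ with the evaluation $I_i^-\ten_\Gamma I_i\to\Gamma$, which is then a quasi-isomorphism of dg bimodules because the counit is an isomorphism. For the other half $I_i\ten_\Gamma I_i^-\cong\Gamma$ there is no single bimodule map available; the paper instead uses the zigzag
\[
\Gamma\longrightarrow \Hom_{\C_{\dg}(\Gamma)}(I_i,I_i)\longleftarrow I_i\ten_\Gamma I_i^-,\qquad x\ten f\mapsto (y\mapsto x\,f(y)),
\]
which realises the unit and yields the bimodule isomorphism in $\D(\Gamma^{\op}\ten_K\Gamma)$. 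Swapping your evaluation to the correct side and inserting this zigzag for the other fixes the argument; the underlying idea (upgrade the unit/counit to bimodule quasi-isomorphisms and use that the forgetful functor reflects quasi-isomorphisms) is exactly the paper's.
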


\begin{proof}
(1) follows from Lemma \ref{mutation and ideal}. 
Then (2) and (3) follows from Lemma~\ref{lem:derived-tensor-functor}. 
For the first isomorphism in  
(4), note that the counit $(-\ten_\Gamma I_i^-)\ten_\Gamma I_i\to \mathrm{Id}_{\D(\Gamma)}$ of the adjoint pair $(-\ten_\Gamma I_i,-\ten_\Gamma I_i^-)$ is induced by the evaluation map $I_i^-\ten_\Gamma I_i\to\Gamma$, which is a quasi-isomorphism of dg bimodules because the counit is an isomorphism of triangle functors. For the second isomorphism, note that the unit is induced by the zigzag of dg bimodule homomorphisms
\[
\Gamma\to \Hom_{\C_{\dg}(\Gamma)}(I_i,I_i)\leftarrow I_i\ten_\Gamma I_i^-,
\]
where the left map is the structure map of $I_i$ and the right map is the canonical map taking $x\ten f$ to $(y\mapsto xf(y))$. This yields an isomorphism $\Gamma\cong I_i\ten_\Gamma I_i^-$ in $\D(\Gamma^{op}\ten_K \Gamma)$ because the unit is an isomorphism of triangle functors.
\end{proof}

The following corollary of Theorem~\ref{main1} is a non-complete version of a result in  \cite[Section III]{BIRS09} for complete preprojective algebras.

\begin{corollary}
\label{cor:BIRS}
Let $Q$ be a connected non-Dynkin quiver and $\Pi$ be the (ordinary) preprojective algebra of $Q$. For any vertex $i$ of $Q$, the ideal $\overline{I}_i:=\Pi(1-e_i)\Pi$ is a two-sided tilting complex of $\Pi$ and 
$-\mathop{{\otimes}^\mathbf{L}_\Pi}\nolimits\overline{I}_i$ gives an autoequivalence of $\D(\Pi)$.
\end{corollary}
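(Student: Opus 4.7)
The key observation is that for a connected non-Dynkin quiver $Q$, the natural projection $\pi:\Gamma\to\Pi=H^0(\Gamma)$, sending every loop $t_i$ to $0$, is a quasi-isomorphism of dg algebras (as recalled in the introduction). The plan is therefore to deduce the corollary from Theorem~\ref{main1} and Corollary~\ref{cor:projectivity-of-I} by transporting the autoequivalence $-\Lotimes I_i$ of $\D(\Gamma)$ along the induced triangle equivalence $F:\D(\Gamma)\xrightarrow{\sim}\D(\Pi)$ and identifying it with $-\Lotimespi \overline{I}_i$.

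First I would compute the cohomology of the dg bimodule $I_i$. Applying the long exact sequence of cohomology to the short exact sequence of dg $\Gamma$-$\Gamma$-bimodules $0\to I_i\to \Gamma\to S_i\to 0$ (used in the proof of Theorem~\ref{main1}), and invoking that $S_i$ is concentrated in degree $0$ while $H^p(\Gamma)=0$ for $p\neq 0$ (the non-Dynkin hypothesis), I obtain $H^p(I_i)=0$ for $p\neq 0$ and $H^0(I_i)=\ker(\Pi\to S_i)=\Pi(1-e_i)\Pi=\overline{I}_i$. Since the short exact sequence is one of dg $\Gamma$-$\Gamma$-bimodules and $\pi$ is a dg algebra map, this yields a canonical quasi-isomorphism $I_i\simeq \overline{I}_i$ in the derived category of dg $\Gamma$-$\Gamma$-bimodules, where $\overline{I}_i$ is viewed as a dg bimodule through $\pi$.

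Second, since $-\Lotimes I_i$ is an autoequivalence of $\D(\Gamma)$ commuting with infinite direct sums (Corollary~\ref{cor:projectivity-of-I}(2)), the transported triangle functor $F\circ(-\Lotimes I_i)\circ F^{-1}$ on $\D(\Pi)$ is an autoequivalence commuting with direct sums, and hence by Lemma~\ref{lem:turning-a-derived-functor-into-tensor} it is isomorphic to $-\Lotimespi T$ for some dg $\Pi$-$\Pi$-bimodule $T$. Evaluating on the regular module $\Pi$ and using the quasi-isomorphism $I_i\simeq \overline{I}_i$ above identifies $T$ with $\overline{I}_i$ in $\D(\Pi^{\op}\otimes_K \Pi)$. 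Consequently $-\Lotimespi\overline{I}_i$ is an autoequivalence of $\D(\Pi)$, and by definition $\overline{I}_i$ is a two-sided tilting complex.

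The main technical care is in the last identification: one must ensure that the transport produces $\overline{I}_i$ with its standard $\Pi$-$\Pi$-action rather than a twist by an outer automorphism. This is handled by observing that the quasi-isomorphism $I_i\simeq \overline{I}_i$ respects both the left and the right dg $\Gamma$-module structures simultaneously, so that the identification takes place at the level of dg bimodules and not merely of right dg modules. Everything else is formal, so I expect no serious obstacle beyond this bookkeeping.
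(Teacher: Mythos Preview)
Your strategy is the same as the paper's: transport $-\Lotimes I_i$ across the triangle equivalence $F=-\Lotimes\Pi:\D(\Gamma)\to\D(\Pi)$ induced by the quasi-isomorphism $\Gamma\to\Pi$, and identify the result with $-\Lotimespi\overline{I}_i$. The paper establishes the intertwining slightly more concretely, by writing down the underived isomorphisms ${}_\Gamma\Pi_\Pi\ten_\Pi\overline{I}_i\cong {}_\Gamma(\overline{I}_i)_\Pi\cong I_i\ten_\Gamma{}_\Gamma\Pi_\Pi$ and then invoking the $\KK$-projectivity of $I_i$ (Corollary~\ref{cor:projectivity-of-I}(1)) to pass to derived tensors; your route via the long exact cohomology sequence reaches the same bimodule quasi-isomorphism $I_i\simeq\overline{I}_i$.

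One technical point: your appeal to Lemma~\ref{lem:turning-a-derived-functor-into-tensor} for the transported functor $G=F\circ(-\Lotimes I_i)\circ F^{-1}$ is not quite licit, since that lemma is stated for the left derived functor of an honest dg functor, which $G$ is not a priori. You do not actually need it. Having established the quasi-isomorphism $I_i\simeq\overline{I}_i$ of dg $\Gamma$-$\Gamma$-bimodules, use directly that $\overline{I}_i$ is already a $\Pi$-$\Pi$-bimodule: then $-\Lotimes\overline{I}_i$ factors as $\pi_*\circ(-\Lotimespi\overline{I}_i)\circ(-\Lotimes\Pi)$, giving the commutative square
\[
\xymatrix@=3pc{
\D(\Gamma)\ar[r]^{-\Lotimes\Pi}\ar[d]_{-\Lotimes I_i} & \D(\Pi)\ar[d]^{-\Lotimespi\overline{I}_i}\\
\D(\Gamma)\ar[r]^{-\Lotimes\Pi} & \D(\Pi)
}
\]
and the result follows. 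This is exactly the square the paper writes down, and it sidesteps both the invocation of Lemma~\ref{lem:turning-a-derived-functor-into-tensor} and the worry about outer twists that you flag at the end.
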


\begin{proof}
By definition we have isomorphisms
\[
{}_{\Gamma}\Pi_{\Pi}\ten_{\Pi}{}_\Pi(\overline{I}_i)_{\Pi}\cong{}_\Gamma(\overline{I}_i)_{\Pi}\cong{}_{\Gamma}(I_i)_{\Gamma}\ten_\Gamma {}_{\Gamma}\Pi_{\Pi}.
\]
For a $\KK$-projective dg $\Gamma$-module $M$, the dg $\Pi$-module $M\ten_{\Gamma}{}_{\Gamma}\Pi_{\Pi}$ is $\KK$-projective and by Corollary~\ref{cor:projectivity-of-I}(1), the dg $\Gamma$-module $M\ten_{\Gamma}{}_{\Gamma}(I_i)_{\Gamma}$ is $\KK$-projective. Moreover, there is a canonical isomorphism of dg $\Pi$-modules
\[
M\ten_{\Gamma}{}_{\Gamma}\Pi_{\Pi}\ten_{\Pi}{}_\Pi(\overline{I}_i)_{\Pi}\cong M\ten_{\Gamma}{}_{\Gamma}(I_i)_{\Gamma}\ten_\Gamma {}_{\Gamma}\Pi_{\Pi}.
\]
This implies that there is a canonical isomorphism for $M\in\D(\Gamma)$
\[
(M\Lotimes {}_{\Gamma}\Pi_{\Pi})\Lotimespi {}_\Pi(\overline{I}_i)_{\Pi}\cong (M\Lotimes {}_{\Gamma}(I_i)_{\Gamma})\Lotimes {}_{\Gamma}\Pi_{\Pi},
\]
namely, there is a commutative diagram of triangle functors (up to isomorphism)
\[
\xymatrix@=3pc{
\D(\Gamma)\ar[r]^{-\Lotimes {}_{\Gamma}\Pi_{\Pi}} \ar[d]_{-\Lotimes I_i} & \D(\Pi)\ar[d]^{-\Lotimespi \overline{I}_i}\\
\D(\Gamma)\ar[r]^{-\Lotimes {}_{\Gamma}\Pi_{\Pi}} & \D(\Pi).
}
\]
Now by \cite[Section 4.2]{Ke3} (see also \cite[Lemma 9.1]{KalckYang18b}), the canonical projection $\Ga\to \Pi$ is quasi-isomorphism of dg algebras, so  $-\Lotimes {}_{\Gamma}\Pi_{\Pi}\colon\D(\Gamma)\to\D(\Pi)$ is a triangle equivalence. 
The desired result follows then from Theorem~\ref{main1}.
\end{proof}

\subsection{Silting objects and braid groups}
In this section, we study $\silt(\Gamma):=\silt\per(\Gamma)$.

Recall that there is a decomposition $\Gamma=\bigoplus_i e_i\Gamma$ of the right dg $\Gamma$-module $\Gamma$ into the direct sum of indecomposable direct summands, where $i$ runs over all vertices of $Q$. Below when we perform silting mutation we will keep this labeling of the direct summands and simply denote by $\mu^L_i$ the left mutation at the $i$-th direct summand. For example, in $\per(\Gamma)$ we have
$$\mu_{i}^\Le(\Ga)=\mathrm{Cone}(
{e_{i}\Ga}\stackrel{f}{\to}R_i)\oplus(1-e_{i})\Ga,$$ 
where $f$ is the left $(\add((1-e_{i})\Ga))$-approximation of $e_i\Gamma$ in $\per(\Gamma)$ defined in the beginning of Section~\ref{ss:twist-functors-vs-dg-ideals}.

Let $F_{Q}$ be the free group generated by variables $a_i$,  where $i$ runs over all vertices of $Q$. We denote by $a_i^-$ the inverse of $a_i$. 
For $a=a_{i_1}^{\epsilon_{1}}a_{i_2}^{\epsilon_2}\cdots a_{i_k}^{\epsilon_{k}}\in F_Q$, where $\epsilon_j\in\{+,-\}$, let
\begin{align*}
I_a&:=I_{i_1}^{\epsilon_{1}}\ten_\Gamma I_{i_2}^{\epsilon_{2}}\ten_\Gamma\cdots \ten_\Gamma I_{i_k}^{\epsilon_{k}}.
\end{align*} 
Let $B_Q$ be the braid group associated with $Q$, \emph{i.e.} $B_Q$ is the quotient group of $F_Q$ by the subgroup generated by the  elements $r_{ij}$, where $(i,j)$ runs over all pairs of different vertices of $Q$, and 
\[
r_{ij}=\begin{cases}a_ia_ja_i^{-}a_j^{-} & \text{if there is no edge between $i$ and $j$ in $Q$},\\
a_ia_ja_ia_j^-a_i^-a_j^- & \text{if there is an edge between $i$ and $j$ in $Q$}.
\end{cases}
\]

We denote by $B_{Q}^+$ the positive braid monoid, that is, $B_{Q}^+$ is the submonoid of $B_Q$ generated by $a_i$ for $i\in Q_0$. 
For $a,b$ of $B_Q$, we write $a\geq b$ if $ab^{-1}\in B_{Q}^+$, and it is a partial order on $B_Q$.We refer to \cite{KT} for the background of braid groups.

The aim of this subsection is to show the following result.

\begin{theorem}\label{main2}
\begin{itemize}
\item[(1)] 
There is an order-reversing map 
$$B_Q \to \silt(\Ga),~~a\mapsto I_a.$$
\item[(2)] If $Q$ is Dynkin, then the map in (1) is a bijection. 
\end{itemize}
\end{theorem}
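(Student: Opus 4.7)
The plan is to split Theorem~\ref{main2} into four subclaims---well-definedness of $a\mapsto I_a$ on $B_Q$, membership of each $I_a$ in $\silt(\Gamma)$, the order-reversing property, and bijectivity in the Dynkin case---and handle them in turn.

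For well-definedness I would first consider the obvious assignment from the free group $F_Q$ sending $a_{i_1}^{\epsilon_1}\cdots a_{i_k}^{\epsilon_k}$ to the isomorphism class of $I_{i_1}^{\epsilon_1}\ten_\Gamma\cdots\ten_\Gamma I_{i_k}^{\epsilon_k}$ in $\per(\Gamma)$, and then check that it descends to $B_Q$. The relations $a_ia_i^{-1}=1=a_i^{-1}a_i$ correspond to the bimodule isomorphisms $I_i\ten_\Gamma I_i^-\cong\Gamma\cong I_i^-\ten_\Gamma I_i$ of Corollary~\ref{cor:projectivity-of-I}(4). For the commutation and braid relations between distinct generators of $B_Q$, I would use Theorem~\ref{main1} to reinterpret $-\Lotimes I_i$ as the dual spherical twist $\tw_i^-$ at the $2$-spherical object $S_i$, and then invoke the Seidel--Thomas braid relations for spherical twists. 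What one needs is the Ext-pattern $\dim_K\Hom_{\D(\Gamma)}(S_i,S_j[1])$ equals the number of edges between $i$ and $j$ in $Q$ (for $i\ne j$), with vanishing in other positive degrees; this follows from Proposition~\ref{exact seq} together with the $2$-Calabi--Yau duality \eqref{eq:cy-property-of-derived-preprojective-algebra}. Evaluating the resulting functor isomorphisms on the right regular module $\Gamma$ then produces the desired object-level isomorphisms.

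Both the silting claim and the order-reversing statement reduce to Theorem~\ref{thm:main}(2), which identifies $I_a$ with the iterated silting mutation $\mu_a(\Gamma)$. Since silting mutations preserve silting objects, $I_a\in\silt(\Gamma)$. If $a\ge b$ in $B_Q$, write $a=ub$ with $u\in B_Q^+$; then $I_a\cong I_u\ten_\Gamma I_b\cong \mu_u(I_b)$ by part~(2), and $\mu_u$ is a composition of left mutations, so \cite[Theorem~2.35]{AI12} yields $I_b\ge I_a$ in the silting order, confirming order-reversal.

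For (2) I would treat surjectivity and injectivity separately in the Dynkin setting. Surjectivity reduces to connectedness of the Hasse quiver of $\silt(\Gamma)$ under irreducible mutations: any silting object reachable from $\Gamma$ by a sequence of irreducible left/right mutations has the form $\mu_a(\Gamma)=I_a$ by Theorem~\ref{thm:main}(2). The connectedness in Dynkin type is the substantive input and can be deduced from the classification of bounded t-structures on $\Dfd(\Gamma)$ in terms of mutations in the $2$-Calabi--Yau category, using finite-type properties of Dynkin root systems. For injectivity I would combine part~(4) of Theorem~\ref{thm:main} (faithfulness of the braid action) with a combinatorial invariant such as the $g$-vector or the class in $K_0(\per(\Gamma))$ of the indecomposable summands of $I_a$; since equivalent silting objects share such invariants and these invariants separate distinct braids in the Dynkin case, $I_a\cong I_b$ in $\silt(\Gamma)$ forces $a=b$. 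The main obstacle will be surjectivity in (2), i.e., establishing connectedness of the silting Hasse quiver of $\per(\Gamma)$ in Dynkin type; this is delicate in the non-compactly-generated $2$-Calabi--Yau setting, and once it is in place the injectivity and the remaining items are essentially formal consequences of Theorem~\ref{thm:main}(1)--(4) together with basic braid combinatorics.
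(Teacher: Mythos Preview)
Your treatment of part~(1) is essentially correct. Your route to the braid relations differs from the paper's: you pass through Theorem~\ref{main1} and the Seidel--Thomas relations for spherical twists, whereas the paper proves them directly at the bimodule level (Lemma~\ref{lem:braid-relations}) by explicit computations with the dg ideals $I_iI_j$ and $I_iI_jI_i$. Both work; the paper's version yields slightly more, namely quasi-isomorphisms of dg $\Gamma$-$\Gamma$-bimodules rather than mere isomorphisms of right modules. Your argument for the silting property and the order-reversing claim agrees with the paper's (Lemma~\ref{I=mu} and \cite[Theorem~2.35]{AI12}).

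Part~(2), however, has genuine gaps. For surjectivity you gesture at ``connectedness of the Hasse quiver'' via a classification of t-structures, but you never identify the actual mechanism. The paper's input here is concrete and indispensable: $\per(\Gamma)$ is \emph{silting-discrete} when $Q$ is Dynkin (Lemma~\ref{silt-dic}, drawn from \cite{AMY}), and by \cite[Corollary~3.9]{Ai13} silting-discreteness forces every silting object to be an iterated irreducible mutation of $\Gamma$. Without silting-discreteness your surjectivity argument does not go through.

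Your injectivity argument is more seriously flawed. First, invoking Theorem~\ref{thm:main}(4) is circular: in this paper the faithfulness of the braid action (Corollary~\ref{cor:braid-group-action}) is \emph{deduced from} Theorem~\ref{main2}(2), not the other way around. Second, even granting faithfulness (say via \cite{BT11} independently), it only tells you that distinct braids give non-isomorphic \emph{autoequivalences}; it does not preclude $I_a\cong I_b$ as right dg modules, which is what you need. Third, the $g$-vector/$K_0$ idea fails outright: since $S_i$ is $2$-spherical, $\tw_i^-$ acts on $K_0(\per\Gamma)$ as the simple reflection $s_i$, so the map $a\mapsto [I_a]$ factors through the Weyl group $W_Q$ and cannot separate braids. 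The paper instead uses \cite[Lemma~2.3]{BT11} to reduce to injectivity on the positive monoid $B_Q^+$, and then argues by induction on length: given $I_b\cong I_c$ with $b=b'a_i$, $c=c'a_j$, one uses the order-reversing property and the join $a_{i,j}$ of $a_i,a_j$ in $B_Q^+$ to find, via silting-discreteness again, a finite descending chain of left mutations from $I_{a_{i,j}}$ to $I_b$, hence $d\in B_Q^+$ with $I_{da_{i,j}}\cong I_b$, and the induction hypothesis closes the argument. Silting-discreteness is thus the key ingredient you are missing for \emph{both} halves of~(2).
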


\begin{remark}
\label{rem:group-structure-on-silt}
If $Q$ is Dynkin, by Theorem \ref{main2}, every silting object of $\per(\Gamma)$ is isomorphic to some $I_a$. Therefore $\silt(\Gamma)$ becomes a group with product
\[ I_a \cdot I_b:= I_a\ten_\Gamma I_b \]
and the map is a group isomorphism. 
\end{remark}

Put $\mu^+:=\mu^\Le$ and $\mu^-:=\mu^\Ri$. 
The next statement  shows that the left action of $I_i^{\epsilon}\ten_\Gamma-$ induces the mutation $\mu_i^{\epsilon}$ of silting objects in $\per(\Gamma)$. 
%$\mu_a(\Ga):=\mu_{i_1}^{\epsilon_{1}}\circ\mu_{i_2}^{\epsilon_2}\circ\cdots \circ \mu_{i_k}^{\epsilon_{k}}(\Ga),$

\begin{lemma}\label{I=mu}
For $a=a_{i_1}^{\epsilon_{1}}a_{i_2}^{\epsilon_2}\cdots a_{i_k}^{\epsilon_{k}}\in F_{Q}$, the object $I_{a}$ of $\per(\Gamma)$ is a silting object. Moreover, $I_a=I_{i_1}^{\epsilon_1}\ten_\Gamma I_{a'}$ is a mutation $\mu_{i_1}^{\epsilon_1}(I_{a'})$ of $I_{a'}$ for $a'\in a_{i_2}^{\epsilon_2}\cdots a_{i_k}^{\epsilon_{k}}$.
\end{lemma}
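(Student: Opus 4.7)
The plan is induction on the length $k$ of the word $a=a_{i_1}^{\epsilon_1}\cdots a_{i_k}^{\epsilon_k}$. The base case $k=1$ is immediate from Lemma~\ref{mutation and ideal}: it identifies $I_{i_1}^{\epsilon_1}$ with $\mu_{i_1}^{\epsilon_1}(\Ga)$ as right dg $\Ga$-modules, which is a silting object of $\per(\Ga)$ since $\Ga$ is silting and (left or right) mutation preserves the silting property by \cite[Theorem 2.31]{AI12}.

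For the inductive step, suppose that $I_{a'}$ is silting. By Corollary~\ref{cor:projectivity-of-I} each factor $I_{i_j}^{\epsilon_j}$ is $\KK$-projective on both sides and $-\ten_\Ga I_{i_j}^{\epsilon_j}\cong -\Lotimes I_{i_j}^{\epsilon_j}$ is a triangle autoequivalence of $\per(\Ga)$; composing these gives that $F:=-\ten_\Ga I_{a'}\colon\per(\Ga)\to\per(\Ga)$ is likewise a triangle autoequivalence agreeing with $-\Lotimes I_{a'}$. Under $F$, the decomposition $\Ga=\bigoplus_ie_i\Ga$ is transported to $I_{a'}=\bigoplus_ie_iI_{a'}$; a subsidiary induction, based on the identification $e_iI_i\cong\mathrm{Cone}(e_i\Ga\xrightarrow{f}R_i)$ coming from Proposition~\ref{exact seq}, shows that this labeling of the indecomposable summands of $I_{a'}$ agrees with the one inherited from the mutation sequence producing $I_{a'}$.

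For $\epsilon_1=+$, I would apply $F$ to the distinguished triangle
\[ e_{i_1}\Ga\xrightarrow{f}R_{i_1}\to\mathrm{Cone}(f)\to e_{i_1}\Ga[1] \]
underlying the decomposition $I_{i_1}^+\cong\mathrm{Cone}(f)\oplus(1-e_{i_1})\Ga$ of Lemma~\ref{mutation and ideal}. Acyclicity of $Q$ ensures that $R_{i_1}\in\add((1-e_{i_1})\Ga)$, whence $R_{i_1}\ten_\Ga I_{a'}\in\add((1-e_{i_1})I_{a'})$. Since $F$ is an equivalence it preserves $\Hom$-spaces and thereby carries the left $\add((1-e_{i_1})\Ga)$-approximation $f$ to a left $\add((1-e_{i_1})I_{a'})$-approximation $f\ten I_{a'}\colon e_{i_1}I_{a'}\to R_{i_1}\ten_\Ga I_{a'}$. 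Taking cones and re-attaching the invariant summand yields
\[ I_a=I_{i_1}^+\ten_\Ga I_{a'}\cong\mathrm{Cone}(f\ten I_{a'})\oplus(1-e_{i_1})I_{a'}=\mu_{i_1}^+(I_{a'}), \]
which is again silting by \cite[Theorem 2.31]{AI12}. The case $\epsilon_1=-$ is strictly dual, using the right approximation $g\colon R_{i_1}\to e_{i_1}\Ga$ in place of $f$ together with the identification $I_{i_1}^-\cong\mu_{i_1}^-(\Ga)$.

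The only genuine obstacle is the bookkeeping of summand labels across iterated mutations: one must verify that the labeling produced by the autoequivalence $F$ coincides with the labeling produced by the mutation sequence. Once this compatibility is in place, the rest of the argument is a formal translation, via $F$, of the defining (co)approximation triangle for $I_{i_1}^{\epsilon_1}$ from Lemma~\ref{mutation and ideal}, together with the fact that triangle equivalences intertwine mutations of silting objects.
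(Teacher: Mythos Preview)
Your proof is correct and follows essentially the same approach as the paper: both arguments use that $F=-\ten_\Gamma I_{a'}$ is a triangle autoequivalence of $\per(\Gamma)$ (via Theorem~\ref{main1} and Corollary~\ref{cor:projectivity-of-I}) and that such an equivalence carries the approximation triangle defining $\mu_{i_1}^{\epsilon_1}(\Gamma)\cong I_{i_1}^{\epsilon_1}$ to an approximation triangle defining $\mu_{i_1}^{\epsilon_1}(I_{a'})$. The paper establishes siltingness in one stroke by observing that $I_a$ is the image of the silting object $\Gamma$ under the autoequivalence $-\ten_\Gamma I_a$, whereas you obtain it inductively from preservation of siltingness under mutation; both are fine. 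Your ``genuine obstacle'' about summand labels is not really one: since $F$ sends $e_j\Gamma$ to $e_j\Gamma\ten_\Gamma I_{a'}=e_jI_{a'}$, the transported labeling $I_{a'}=\bigoplus_j e_jI_{a'}$ is by construction the one used to define $\mu_{i_1}^{\epsilon_1}(I_{a'})$, and the paper treats this as evident.
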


\begin{proof} By Corollary~\ref{cor:projectivity-of-I} (1), $I_a$ is $\KK$-projective both as right dg $\Gamma$-module and left dg $\Gamma$-module. Therefore there is a triangle functor
\[
-\ten_\Gamma I_a: \D(\Gamma)\longrightarrow \D(\Gamma),
\]
which is the composition of $-\ten_\Gamma I_{i_1}^{\epsilon_1},\ldots,-\ten_\Gamma I_{i_k}^{\epsilon_k}$. These latter triangle functors, by Theorem~\ref{main1}, are triangle equivalences. So $-\ten_\Gamma I_a$ is a triangle equivalence, and hence restricts to a triangle equivalence $\per(\Gamma)\to\per(\Gamma)$. Since $I_a$ is the image of the silting object $\Gamma$ of $\per(\Gamma)$ under this triangle equivalence, itself is a silting object.

For the second statement, consider the triangle equivalence 
\[
-\ten_\Gamma I_{a'}: \D(\Gamma)\to \D(\Gamma).
\]
It takes a left (respectively, right) approximation to a left (respectively, right) approximation, and hence takes the mutation $\mu_{i_1}^{\epsilon_1}(\Gamma)$ of $\Gamma$ to a mutation $\mu_{i_1}^{\epsilon_1}(I_{a'})$ of $I_{a'}$. By Lemma~\ref{mutation and ideal}, $\mu_{i_1}^{\epsilon_1}(\Gamma)\cong I_{i_1}^{\epsilon_1}$, it follows that $\mu_{i_1}^{\epsilon_1}(I_{a'})\cong I_{i_1}^{\epsilon_1}\ten_\Gamma I_{a'}=I_a$.
\end{proof}

In view of Lemma~\ref{I=mu}, for $a,b\in F_Q$ we put
\[
\mu_a(I_b)=I_a\ten_\Gamma I_b,
\]
which is obtained from $I_b$ by applying a sequence of silting mutations determined by $a$.

\medskip
Next we show that the $I_i$'s satisfy braid relations. 

\begin{lemma}\label{lem:braid-relations}
Let $i,j$ be two different vertices of $Q$. 
\begin{itemize}
\item[(1)] If there is no edge between $i$ and $j$, then $I_iI_j=I_jI_i$. 
\item[(2)] If there is precisely one edge between $i$ and $j$, then there is a dg ideal $I(i,j)$ of $\Gamma$ together with two quasi-isomorphisms of dg $\Gamma$-$\Gamma$-bimodules
\[
I_iI_jI_i\longrightarrow I(i,j)\longleftarrow I_jI_iI_j.
\]
\end{itemize}

\end{lemma}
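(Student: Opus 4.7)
The plan is to deduce parts (1) and (2) by combining the explicit mutation-theoretic description from Section~\ref{ss:silting-mutations-vs-dg-ideals} with a direct analysis of paths in the graded quiver $\tilde Q$, using the spherical twist picture of Theorem~\ref{main1} as the conceptual backbone.

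For part (1), with $i$ and $j$ non-adjacent in $Q$, I would interpret $I_iI_j$ as the tensor product $I_i\ten_\Gamma I_j$ (equal to its derived version by Corollary~\ref{cor:projectivity-of-I}(3), and realized concretely as a sub-bimodule of $\Gamma$ via multiplication). The key observations are that $e_it_j = e_je_it_j = 0$, so $t_it_j = t_jt_i = 0$; that $I_i + I_j = \Gamma$ since together the complementary idempotents generate $\Gamma$; and that any path in $\tilde Q$ between $i$ and $j$ must pass through some vertex $k \notin \{i,j\}$, hence lies in $\Gamma e_k\Gamma \subseteq I_i\cap I_j$. The first two observations give the standard identity $I_i \cap I_j = I_iI_j + I_jI_i$, and the third lets me show that every element of $I_i \cap I_j$ already lies in $I_iI_j$ (and symmetrically in $I_jI_i$). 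Equivalently, in the mutation picture, since the left mutation at $i$ only uses approximations by summands indexed by neighbors of $i$ in $Q$, the mutations $\mu_i^\Le$ and $\mu_j^\Le$ act on disjoint summands when $i,j$ are non-adjacent, giving $\mu_i^\Le\mu_j^\Le(\Gamma) \cong \mu_j^\Le\mu_i^\Le(\Gamma)$ and hence the desired equality via Lemma~\ref{I=mu}.

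For part (2), with $i,j$ connected by a single edge $\alpha$ of $Q$, I would interpret $I_iI_jI_i$ as the tensor product $I_i\ten_\Gamma I_j \ten_\Gamma I_i$ of $\KK$-projective dg bimodules (Corollary~\ref{cor:projectivity-of-I}(1)). A natural candidate for $I(i,j)$ is the dg ideal
\[
I(i,j) := \Gamma(1-e_i-e_j)\Gamma + \Gamma\alpha\Gamma + \Gamma\alpha^*\Gamma + \Gamma t_i\Gamma + \Gamma t_j\Gamma,
\]
which is manifestly symmetric in $i,j$; alternatively, one can take $I(i,j)$ to be the common image in $\Gamma$ of the iterated multiplication maps from $I_iI_jI_i$ and $I_jI_iI_j$. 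In either case, the required quasi-isomorphisms $I_iI_jI_i \to I(i,j) \leftarrow I_jI_iI_j$ are the multiplication maps. To verify these, I would pass through the functor picture: by Theorem~\ref{main1} we have $-\Lotimes I_k \cong \tw_k^-$, and a Seidel--Thomas style braid relation for spherical twists yields $\tw_i^-\tw_j^-\tw_i^- \cong \tw_j^-\tw_i^-\tw_j^-$ once one verifies $\dim \Hom^\bullet_{\D(\Gamma)}(S_i,S_j) = 1$, which is a direct computation from Proposition~\ref{exact seq}. Combining this functorial isomorphism with Lemma~\ref{lem:turning-a-derived-functor-into-tensor} yields the abstract quasi-isomorphism $I_i\ten_\Gamma I_j\ten_\Gamma I_i \simeq I_j\ten_\Gamma I_i\ten_\Gamma I_j$ of dg bimodules, and the concrete multiplication-map realization provides the common dg ideal $I(i,j)$ as claimed.

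The main obstacle will lie in part (2): one needs not merely an abstract quasi-isomorphism of the two tensor products but a concrete factorization through a genuine dg ideal of $\Gamma$, with the quasi-isomorphisms realized as explicit multiplication or inclusion maps. This requires careful bookkeeping with the $\KK$-projective resolutions supplied by Proposition~\ref{exact seq} and the identifications of Lemma~\ref{mutation and ideal}, together with a degree-by-degree cohomology comparison. The spherical twist picture essentially guarantees the conclusion, but translating it into the desired concrete form in $\Gamma$ is the technical heart of the argument.
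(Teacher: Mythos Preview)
Part (1) is essentially the paper's argument: both rest on the fact that any path in $\tilde Q$ between $i$ and $j$ passes through a third vertex and hence lies in $\Gamma(1-e_i-e_j)\Gamma$. The paper shows directly that $I_iI_j=I_jI_i=\Gamma(1-e_i-e_j)\Gamma+\Gamma t_i\Gamma+\Gamma t_j\Gamma$; your detour through $I_i\cap I_j$ would also work. Note, however, that your mutation alternative only yields an isomorphism in $\per(\Gamma)$, not the equality of ideals the lemma asserts.

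Part (2) has a genuine gap. Your candidate ideal is too large: the arrow $\alpha$ lies in your $I(i,j)$ but not in $I_iI_jI_i$, because $I_iI_jI_i$ is already contained in the smaller ideal $J:=\Gamma(1-e_i-e_j)\Gamma+\Gamma t_i\Gamma+\Gamma t_j\Gamma+\Gamma\alpha\alpha^*\Gamma+\Gamma\alpha^*\alpha\Gamma$ (by the same path argument as in (1)), and $\alpha$ survives in the $4$-dimensional quotient $\Gamma/J$. Consequently the cokernel of $I_iI_jI_i\hookrightarrow\text{(your }I(i,j))$ has $\alpha$ and $\alpha^*$ as nonzero classes in $H^0$, so the inclusion is not a quasi-isomorphism. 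Your fallback ``common image'' does not exist either: the two multiplication maps land in the \emph{distinct} ideals $I_iI_jI_i$ and $I_jI_iI_j$ (for instance $t_i$ lies in the latter but not the former). More fundamentally, invoking the Seidel--Thomas braid relation gives only an isomorphism of triangle functors, hence at best an isomorphism in $\D(\Gamma^{\op}\ten_K\Gamma)$; it does not hand you a dg ideal of $\Gamma$ together with explicit bimodule maps from both triple products, which is what the lemma demands. The paper avoids this entirely by direct computation: it takes $I(i,j)=J$, checks $I_iI_jI_i\subseteq I(i,j)$, and verifies that the $2$-dimensional quotient $I(i,j)/I_iI_jI_i=Kt_i\oplus K\alpha^*\alpha$ is contractible via $d(t_i)=-\alpha^*\alpha$; symmetrically for $I_jI_iI_j$. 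No spherical-twist machinery is used.
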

\begin{proof}
The idea of this proof comes from the proof of \cite[Proposition III.1.8]{BIRS09}.

Assume that there is no edge between $i$ and $j$. Let $I(i,j)=\Gamma(1-e_i-e_j)\Gamma+\Gamma t_i\Gamma+\Gamma t_j\Gamma$, which is a dg ideal of $\Gamma$. Then $I(i,j)$ is contained in $I_iI_j$ and
\begin{align*}
I_iI_j
&\subseteq\Gamma(1-e_i)\Gamma(1-e_j)\Gamma+\Gamma t_i\Gamma+\Gamma t_j\Gamma\\
&\subseteq \Gamma(1-e_i-e_j)\Gamma+\Gamma e_j\Gamma e_i\Gamma+\Gamma t_i\Gamma+\Gamma t_j\Gamma.
\end{align*}
Recall from Section~\ref{ss:derived-preprojective-algebra} that as a graded $K$-algebra $\Gamma$ is the graded path algebra of the graded quiver $\tilde{Q}$. As a graded ideal of $\Gamma$, $\Gamma e_j\Gamma e_i\Gamma$ is generated by the paths of $\tilde{Q}$ from $i$ to $j$. Under our assumption such a path has to pass through a vertex different from $i$ and $j$, and hence belongs to $\Gamma(1-e_i-e_j)\Gamma$. Therefore
\[
I_iI_j\subseteq \Gamma(1-e_i-e_j)\Gamma+\Gamma t_i\Gamma+\Gamma t_j\Gamma=I(i,j).
\]
So $I_iI_j=I(i,j)$. 
Similarly, $I_jI_i=I(i,j)$ and hence $I_iI_j=I_jI_i$.

\smallskip
Assume that there is precisely one arrow from $i$ to $j$, say $\rho$. Define two dg ideals of $\Gamma$
\begin{align*}
I(i,j)&= \Gamma(1-e_i-e_j)\Gamma+\Gamma t_i\Gamma+\Gamma\rho^*\rho\Gamma+\Gamma t_j\Gamma+\Gamma \rho\rho^*\Gamma.\\
I_{i,j,i}&=\Gamma(1-e_i-e_j)\Gamma+\Gamma t_i^2\Gamma+\Gamma t_i\rho^*\rho\Gamma+\Gamma\rho^*\rho t_i\Gamma+\Gamma t_j\Gamma+\Gamma\rho\rho^*\Gamma.
\end{align*} 
Then $I_{i,j,i}$ is contained in $I_iI_jI_i$ and 
\begin{align*}
I_iI_jI_i&\subseteq \Gamma (1-e_i)\Gamma(1-e_j)\Gamma(1-e_i)\Gamma+\Gamma t_i\Gamma+\Gamma t_j\Gamma\\
&=\Gamma(1-e_i-e_j)\Gamma+\Gamma e_j\Gamma e_i\Gamma e_j\Gamma+\Gamma t_i\Gamma+\Gamma t_j\Gamma.
\end{align*}
As a graded ideal of $\Gamma$, $\Gamma e_j\Gamma e_i\Gamma e_j\Gamma$ is generated by the paths of $\tilde{Q}$ starting from $j$, passing through $i$ and ending in $j$. Under our assumption such a path either passes through a vertex different from $i$ and $j$, and hence belongs to $\Gamma(1-e_i-e_j)\Gamma$, or has a subpath $t_i$, and hence belongs to $\Gamma t_i \Gamma$, or has a subpath $\rho\rho^*$. Therefore
\[
I_iI_jI_i\subseteq \Gamma(1-e_i-e_j)\Gamma+\Gamma t_i\Gamma+\Gamma t_j\Gamma+\Gamma \rho\rho^*\Gamma\subseteq I(i,j).
\]
Thus we obtain a chain of dg ideals of $\Gamma$
\[
I_{i,j,i}\subseteq I_iI_jI_i\subseteq I(i,j).
\]
Moreover, the quotient $I(i,j)/I_{i,j,i}$ is the 2-dimensional contractible complex
\[
K\{t_i\}\oplus K\{\rho^*\rho\}
\]
with $t_i$ in degree $-1$ and $d(t_i)=-\rho^*\rho$. Since $\rho^*\rho$ does not belong to $I_iI_jI_i$, it follows that the quotient $I_iI_jI_i/I_{i,j,i}$ does not contain $\rho^*\rho$, which spans the unique proper subcomplex of $I(i,j)/I_{i,j,i}$. So $I_iI_jI_i/I_{i,j,i}$ is trivial and $I_iI_jI_i=I_{i,j,i}$. Therefore the inclusion $I_iI_jI_i\hookrightarrow I(i,j)$ is a quasi-isomorphism of dg $\Gamma$-$\Gamma$-modules. Similarly, $I_jI_iI_j$ is a dg subideal of $I(i,j)$ and the inclusion $I_jI_iI_j\hookrightarrow I(i,j)$ is a quasi-isomorphism.
\end{proof}

Then we give a proof of Theorem~\ref{main2} (1).

\begin{proof}[Proof of  Theorem~\ref{main2} (1)]
By Lemmas~\ref{mutation and ideal}~and~\ref{lem:braid-relations}, there is a map 
$$B_Q \ni a\mapsto I_a\in\silt(\Ga).$$ 
Now for two elements $a\geq b$ of $B_Q$, it follows from Corollary~\ref{cor:projectivity-of-I} (4) that 
$I_a$ is isomorphic in $\per(\Gamma)$ to $\mu_{ab^{-1}}(I_b)=I_{ab^{-1}}\ten_\Gamma I_b$, which is no greater than $I_b$ since $ab^{-1}\in B_Q^+$ and hence $\mu_{ab^{-1}}$ is the composition of a sequence of left mutations.
\end{proof}

Next we assume that $Q$ is Dynkin and give a proof  Theorem~\ref{main2} (2), which is similar to the proofs of \cite[Proposition 6.5]{AM15} and of \cite[Theorem 3.1]{BT11}. We need the following result, which is part of \cite[Lemma 8.5 and Corollary 8.6]{AMY}.

\begin{lemma}
\label{lem:Hom-finiteness}\label{silt-dic}
The category $\per(\Gamma)$ is Hom-finite and Krull--Schmidt. Moreover, it is silting-discrete, that is, 
$$
\{ M\in \silt(\Gamma) \mid \Ga\ge M \ge \Ga[n]\}
$$ 
is a finite set for any positive integer $n$. 
\end{lemma}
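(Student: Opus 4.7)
The plan is to establish Hom-finiteness, deduce Krull--Schmidt formally, and then reduce silting-discreteness to a finiteness statement about 2-term silting intervals, transported along the derived auto-equivalences constructed in Theorem~\ref{main1}.

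For Hom-finiteness I would first argue that when $Q$ is Dynkin the dg algebra $\Gamma$ has finite-dimensional total cohomology, so that $\Gamma\in\Dfd(\Gamma)$. This is the standard structural fact for derived preprojective algebras of Dynkin type, and can be verified either directly from the graded structure of $\tilde Q$ (bounding the number of non-vanishing negative cohomologies via a Coxeter-like iteration with $H^{0}(\Gamma)=\Pi$ finite-dimensional as a base case) or by identifying $\Gamma$ with the 2-Calabi--Yau completion of $KQ$ and invoking known finiteness results for Dynkin type. Combined with homological smoothness of $\Gamma$ (recalled in Section~\ref{ss:derived-preprojective-algebra}), this forces $\Dfd(\Gamma)$ to be Hom-finite, and the inclusion $\per(\Gamma)\subseteq\Dfd(\Gamma)$ then gives Hom-finiteness of $\per(\Gamma)$. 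Krull--Schmidt follows formally: $\per(\Gamma)$ is idempotent-complete as a thick subcategory, and any Hom-finite idempotent-complete additive $K$-category is Krull--Schmidt.

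For silting-discreteness I would invoke Aihara's criterion, according to which a Hom-finite Krull--Schmidt triangulated category with a silting object is silting-discrete precisely when the 2-term silting interval $\{M\in\silt(\Gamma)\mid \Gamma\ge M\ge\Gamma[1]\}$ above every silting object is finite. At the basepoint $\Gamma$, $\tau$-tilting theory identifies this interval with the set of support $\tau$-tilting modules over $H^{0}(\Gamma)=\Pi$, which is finite for Dynkin $\Pi$ by classical results. For a general silting object, Theorem~\ref{main1} together with Corollary~\ref{cor:projectivity-of-I} realizes every iterated silting mutation as a derived auto-equivalence of the form $-\ten_\Gamma I$ for a dg bimodule $I$; such auto-equivalences preserve 2-term silting intervals, so finiteness at $\Gamma$ propagates to every silting object reachable from $\Gamma$ by iterated mutation. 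An induction on $n$ covering $\{M\in\silt(\Gamma)\mid \Gamma\ge M\ge\Gamma[n]\}$ by concatenation of 2-term intervals then yields the stated finiteness.

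The main obstacle will be silting-connectedness: one needs that every silting object in $\{M\in\silt(\Gamma)\mid \Gamma\ge M\ge\Gamma[n]\}$ is actually reachable from $\Gamma$ by a finite sequence of irreducible left mutations, not merely that such mutations exist. This is a built-in but non-trivial feature of Aihara's silting-discreteness framework, and is the technical heart of the argument; once granted, the finiteness propagation via the bimodule-tensor auto-equivalences of Theorem~\ref{main1} is formal and the induction closes. Note that this argument is logically independent of Theorem~\ref{main2}(2), so no circularity arises when Lemma~\ref{silt-dic} is later used to derive surjectivity of the braid-to-silting map.
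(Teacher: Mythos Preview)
The paper does not prove this lemma; it records the statement as part of \cite[Lemma~8.5 and Corollary~8.6]{AMY}. Your sketch therefore supplies what the paper outsources, and the overall strategy---finite-dimensionality of $H^*(\Gamma)$ for Dynkin $Q$ giving $\per(\Gamma)\subseteq\Dfd(\Gamma)$ and hence Hom-finiteness and Krull--Schmidt, then $\tau$-tilting finiteness of $\Pi=H^0(\Gamma)$ transported via auto-equivalences to deduce silting-discreteness---is sound.

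There is one imprecision worth flagging. The criterion you quote, that $2$-term finiteness at \emph{every} silting object characterises silting-discreteness, is not directly usable here: your auto-equivalences only carry $\Gamma$ to objects of the form $I_a$, and you do not yet know these exhaust $\silt(\Gamma)$. What you actually need is the sharper theorem (see \cite{Ai13}, or the preliminaries of \cite{AM15}) that $2$-term finiteness at every silting object \emph{reachable from $\Gamma$ by iterated irreducible left mutation} already forces silting-discreteness; the proof of that theorem is precisely where the connectedness you single out as the main obstacle is simultaneously established. Your phrase ``built-in but non-trivial feature of Aihara's framework'' points at the right place, but the induction on $n$ you sketch cannot be run independently---it \emph{is} the content of that theorem, and your argument only closes once it is invoked explicitly. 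With that in hand, the transport step via Theorem~\ref{main1}, Lemma~\ref{mutation and ideal} and Corollary~\ref{cor:projectivity-of-I} (none of which depend on the present lemma) is indeed formal, and no circularity with Theorem~\ref{main2} arises.
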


\begin{proof}[Proof of  Theorem~\ref{main2} (2)] 
We need to show that the map is bijective. 
By Lemma~\ref{silt-dic}, $\per(\Gamma)$ is silting-discrete and hence any silting object of $\per(\Gamma)$ can be obtained from $\Gamma$ by a sequence of mutations, by \cite[Corollary 3.9]{Ai13}. Therefore
the map is surjective. 

It remains to show the injectivity. By \cite[Lemma 2.3]{BT11}, it suffices to show that the restriction 
$B_{Q}^+\to\silt\Ga,\ a\mapsto I_a$, is injective. 

For  $a\in B_{Q}^+$ we denote by $\ell(a)$ its length, that is, the number of elements in a reduced expression of $a$. Take $b,c\in B_{Q}^+$ such that $I_b\cong I_c$ in $\per(\Ga)$. 
Without loss of generality, we may assume that $\ell(b)\le\ell(c)$. 
We show that $b=c$ by induction on $\ell(b)$.

If $\ell(b)=0$, \emph{i.e.} $b=\id$, then $I_b=\Ga$. 
Then we have $c=\id$ because otherwise $\Ga>I_c=\mu_c(\Ga)$ by \cite[Theorem 2.35]{AI12}, where $>$ is the partial order defined in Section \ref{sectionsilting}. 

Next assume that $\ell(b)>0$. 
We write 
$b=b'a_i$ and $c=c'a_j$ for some $b',c'\in B_{Q}^+$ and vertices $i,j$ of $Q$. 
If $i=j$, then $I_{b'}\cong I_{c'}$ and the induction 
hypothesis implies that $b'=c'$ and hence $b=c$. In the rest we assume that $i\neq j$ and define 
\[a_{i,j}:=
\left\{\begin{array}{ll}
\ a_ia_j & \mbox{if there is no edge between $i$ and $j$ in $Q$},\\
\ a_ia_ja_i & \mbox{if there is precisely one edge $i\stackrel{ }{\mbox{---}}j$ in $Q$}.
\end{array}\right.\]
Then $a_{i,j}$ is the unique minimal common upper bound (that is, join in $B_Q^+$) of $a_i$ and $a_j$ \cite[Theorem 6.19, 6.20, (Section 6.5.3)]{KT}. 
%\new{By \cite[Proposition A.3,A.6]{BY13}, there is a bijection between two-term silting objects over derived preprojective algebra of Dynkin and the one over (ordinary) preprojective algebra of Dynkin and their mutation structures are compatible. Then  \cite[Theorem 4.1]{AM15} (\cite[Theorem 2.30]{M14}) implies that $I_{a_{i,j}}$ is a greatest lower bound of $I_{i}$ and $I_{j}$ in the poset $(\silt(\Gamma),\geq)$.} \old{Then it follows from Lemma \ref{lem:braid-relations} and \cite[Theorem 2.35]{AI12} that $I_{a_{i,j}}$ is a meet of $I_{i}$ and $I_{j}$ in the poset $(\silt(\Gamma),\geq)$.}
Therefore, the fact that the map is order-reversing implies 
 $I_{a_{i,j}}\geq I_b\cong I_c$. %because $I_{i}\geq I_b$ and $I_{j}\geq I_c\cong I_b$. 
 
Applying \cite[Proposition 2.36]{AI12}, there exists a vertex $k_1$ of $Q$ such that $I_{a_{i,j}}> \mu_{k_1}^\Le(I_{a_{i,j}})\geq I_b$. Repeating this we obtain a sequence of silting objects 
\[
I_{a_{i,j}}\gneq\mu_{k_1}^\Le(I_{a_{i,j}})\gneq\mu_{k_2}^\Le\circ\mu_{k_1}^\Le(I_{a_{i,j}})\gneq\ldots\geq I_b.
\]

Because $\per(\Gamma)$ is silting-discrete, 
there are only finitely many isomorphism classes of basic silting objects 
between $I_{a_{i,j}}$ and $I_b$. Therefore the above sequence has to be finite. Namely, there exists $d\in B_{Q}^+$ such that 
$I_d\Lotimes I_{a_{i,j}}= I_{da_{i,j}}\cong I_b\cong I_{b'}\Lotimes I_i$. It follows that $I_{da_{i,j}a_i^{-1}}\cong I_{b'}$. 
Since $da_{i,j}a_i^{-1}\in B_{Q}^+$, the induction
hypothesis implies that $da_{i,j}a_i^{-1}=b'$ and hence $da_{i,j}=b$. 
Similarly, we have $I_{da_{i,j}a_j^{-1}}\cong I_{c'}$ and  
$da_{i,j}a_j^{-1}=c'$. 
Therefore $b=da_{i,j}=c'a_j=c$ and the proof is complete.
\end{proof}

As a consequence of Corollary~\ref{cor:projectivity-of-I}, Theorem~\ref{main2} and Lemma~\ref{lem:braid-relations}, we have

\begin{corollary}
\label{cor:braid-group-action}
The braid group $B_Q$ acts on $\D(\Gamma)$ with $a_i$ acting as $\tw^-_i$ ($i\in Q_0$). When $Q$ is Dynkin, this action is faithful.
\end{corollary}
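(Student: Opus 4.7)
The plan is to construct the action first and then derive faithfulness from Theorem~\ref{main2}(2). By Theorem~\ref{main1} together with Corollary~\ref{cor:projectivity-of-I}, each functor $-\Lotimes I_i\cong\tw^-_i$ is a triangle autoequivalence of $\D(\Gamma)$ with quasi-inverse $-\Lotimes I_i^-$. This gives a well-defined assignment $a_i\mapsto[\tw^-_i]$ from the free group $F_Q$ on the generators $a_i$ to the group of isomorphism classes of triangle autoequivalences of $\D(\Gamma)$.

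To check that this assignment descends to $B_Q$, I verify the two families of defining relations. If $i$ and $j$ are not joined by an edge, Lemma~\ref{lem:braid-relations}(1) gives an equality of dg bimodules $I_i\ten_\Gamma I_j=I_j\ten_\Gamma I_i$; combined with the $\KK$-projectivity in Corollary~\ref{cor:projectivity-of-I}(1) this identifies these ordinary tensor products with their derived counterparts, so $\tw^-_i\circ\tw^-_j\cong\tw^-_j\circ\tw^-_i$ as autoequivalences. If there is precisely one edge between $i$ and $j$, Lemma~\ref{lem:braid-relations}(2) produces a zigzag of quasi-isomorphisms of dg bimodules $I_iI_jI_i\to I(i,j)\leftarrow I_jI_iI_j$, yielding an isomorphism in $\D(\Gamma^{op}\ten_K\Gamma)$ and hence an isomorphism $\tw^-_i\circ\tw^-_j\circ\tw^-_i\cong\tw^-_j\circ\tw^-_i\circ\tw^-_j$ of autoequivalences. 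Therefore the assignment factors through $B_Q$, producing the desired group action.

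For faithfulness in the Dynkin case, suppose $a\in B_Q$ acts as the identity on $\D(\Gamma)$. Applying the corresponding autoequivalence to $\Gamma$ yields $I_a\cong\Gamma$ in $\per(\Gamma)$, so $[I_a]=[I_{\id}]$ in $\silt(\Gamma)$. But Theorem~\ref{main2}(2) asserts that $a\mapsto[I_a]$ is a bijection from $B_Q$ to $\silt(\Gamma)$, and its injectivity then forces $a=\id$.

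The main subtlety is the passage from the quasi-isomorphisms of dg bimodules in Lemma~\ref{lem:braid-relations} to isomorphisms of the underlying derived tensor endofunctors; this is exactly what the two-sided $\KK$-projectivity in Corollary~\ref{cor:projectivity-of-I}(1) secures, since both sides of $\KK$-projectivity are needed so that iterated underived tensor products compute iterated derived tensor products. Once this translation is in hand, the remaining argument is a direct application of the bijection in Theorem~\ref{main2}(2).
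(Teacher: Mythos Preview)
Your argument is correct and matches the paper's intended route exactly: the action is assembled from Theorem~\ref{main1}, Corollary~\ref{cor:projectivity-of-I}, and Lemma~\ref{lem:braid-relations}, and faithfulness in the Dynkin case is read off from the injectivity in Theorem~\ref{main2}(2) by evaluating at $\Gamma$. One small precision: Lemma~\ref{lem:braid-relations} is phrased in terms of ideal products $I_iI_j$ rather than tensor products $I_i\ten_\Gamma I_j$, so strictly speaking you also need that the multiplication map $I_i\ten_\Gamma I_j\to I_iI_j$ is an isomorphism; this holds because, by the explicit description in Lemma~\ref{mutation and ideal}, each $I_i$ is graded projective (hence flat) on either side, which is what your appeal to $\KK$-projectivity is really buying.
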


\subsection{Spherical collections}
\label{ss:spherical-collection}
As a consequence of the previous results, we give a classification of spherical collections.

A collection $(E_1,\ldots,E_r)$ of objects of $\D_{\fd}(\Gamma)$ is called a \emph{spherical collection} if 
\begin{itemize}
\item[-] all $E_1,\ldots,E_r$ are $2$-spherical objects,
\item[-] $\Hom(E_i, E_j[p])=0$ for all $i\neq j$ and for all $p\leq 0$,
\item[-] $\D_{\fd}(\Gamma)=\thick(E_1,\ldots,E_r)$.
\end{itemize} 
For example, the collection $\mathcal{S}=(S_i)_{i\in Q_0}$ of simple dg $\Gamma$-modules is a spherical collection. Moreover, the property of being a spherical collection is preserved under applying autoequivalences. 

\smallskip
A classification of spherical objects in $\D_{\fd}(\Gamma)$ is given in \cite{BapatDeopurkarLicata23,HaraWemyss22}:  any spherical object can be obtained from some $S_k$ by applying a sequence of spherical twist functors $\tw^{\pm}_i$ ($i\in Q_0$). Part (b) of the following result can also be proved by using \cite[Corollaries 1.2 and 1.4]{HaraWemyss22}.

\begin{theorem}
\label{thm:spherical-collection}
Assume that $Q$ is Dynkin.
\begin{itemize}
\item[(a)] Any basic silting object of $\per(\Gamma)$ can be obtained from $\Gamma$ by applying a sequence of spherical twist functors $\tw^{\pm}_i$ ($i\in Q_0$).
\item[(b)] Any spherical collection of $\D_{\fd}(\Gamma)$ can be obtained from $\mathcal{S}$ by applying a sequence of spherical twist functors $\tw^{\pm}_i$ ($i\in Q_0$).
\end{itemize}
\end{theorem}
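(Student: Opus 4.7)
My plan is to deduce (a) directly from the structural results already established in the paper, and to derive (b) by combining (a) with a silting--spherical collection correspondence.

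For part (a), by Theorem~\ref{main2}(2) every basic silting object of $\per(\Gamma)$ is isomorphic to some $I_a = I_{i_1}^{\epsilon_1}\ten_\Gamma\cdots\ten_\Gamma I_{i_k}^{\epsilon_k}$ with $a = a_{i_1}^{\epsilon_1}\cdots a_{i_k}^{\epsilon_k}\in B_Q$. I would rewrite this as $\Gamma\ten_\Gamma I_{i_1}^{\epsilon_1}\ten_\Gamma\cdots\ten_\Gamma I_{i_k}^{\epsilon_k}$ and use Theorem~\ref{main1} (which identifies $-\ten_\Gamma I_i$ with $\tw^-_i$) together with Corollary~\ref{cor:projectivity-of-I}(2) and Proposition~\ref{prop:twist-functor} (which identify $-\ten_\Gamma I_i^-$ with the quasi-inverse $\tw_i$ of $\tw^-_i$). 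This presents the given silting object as the image of $\Gamma$ under the composition $\tw^{-\epsilon_k}_{i_k}\circ\cdots\circ\tw^{-\epsilon_1}_{i_1}$, which is the desired sequence of spherical twist functors.

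For part (b), the strategy is to set up a bijection between basic silting objects of $\per(\Gamma)$ and spherical collections in $\Dfd(\Gamma)$, and then transport the conclusion of (a) across this bijection. To a basic silting object $T$ one associates the collection of simples of the heart of the bounded t-structure on $\Dfd(\Gamma)$ determined by $T$; in the Dynkin, $2$-Calabi--Yau setting these simples form a spherical collection, with $\Gamma$ itself corresponding to $\mathcal{S}$. Any triangle autoequivalence $F$ of $\D(\Gamma)$ restricts to $\Dfd(\Gamma)$, maps hearts to hearts, and hence sends the spherical collection of $T$ to that of $F(T)$. Applying (a) to the silting object $T_\mathcal{E}$ corresponding to a given spherical collection $\mathcal{E}$ yields $T_\mathcal{E}\cong F(\Gamma)$ for a composition $F$ of spherical twists $\tw^\pm_i$, from which I deduce $\mathcal{E}\cong F(\mathcal{S})$.

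The main obstacle is establishing the silting--spherical collection bijection itself. One needs to verify that the simples of the heart of any basic silting object are genuinely $2$-spherical (using the Calabi--Yau duality \eqref{eq:cy-property-of-derived-preprojective-algebra} applied to simples of a length heart), and that the inverse assignment sending a spherical collection to a silting object is well-defined. In the Dynkin case this can be carried out via the Koenig--Yang-type machinery for $2$-Calabi--Yau triangulated categories, or equivalently by invoking \cite[Corollaries 1.2 and 1.4]{HaraWemyss22} as the authors suggest. Once the bijection is in place, (b) follows formally from (a) and the naturality of the assignment with respect to autoequivalences.
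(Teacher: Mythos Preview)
Your proposal is correct and follows essentially the same route as the paper. For (a) the paper likewise invokes Theorem~\ref{main2}(2), Theorem~\ref{main1} and Corollary~\ref{cor:projectivity-of-I}; for (b) the paper also transports (a) across a silting/simple-object correspondence that is natural under autoequivalences. The only cosmetic difference is that the paper phrases the correspondence as ``silting objects $\leftrightarrow$ simple-minded collections'' (citing \cite{KN2,N,Fu23}) rather than ``silting objects $\leftrightarrow$ spherical collections'': since any spherical collection is automatically simple-minded, this immediately supplies the inverse assignment you flag as the main obstacle, and the fact that every simple-minded collection is in fact spherical then drops out a posteriori from (a).
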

\begin{proof}
(a) This is a consequence of 
Theorem~\ref{main1}, Corollary~\ref{cor:projectivity-of-I} and Theorem~\ref{main2}(2).

(b) We first remark that a spherical collection is a simple-minded collection. By \cite{KN2} (see \cite[Theorem 6.3]{N} and see also \cite[Theorem B]{Fu23} for a more recent and more general result), there is a bijection between the set of isoclasses of basic silting objects of $\per(\Gamma)$ and the set of simple-minded collections of $\D_{\fd}(\Gamma)$. This bijection takes $\Gamma$ to $\mathcal{S}$. Moreover, according to \cite[Remark 3.7]{Fu23}, if a basic silting object $M=M_1\oplus\ldots\oplus M_r$ with $M_1,\ldots,M_r$ indecomposable and a simple-minded collection $(X_1,\ldots,X_r)$ correspond to each other under this bijection, then they determine each other by the following Hom-duality up to reordering:
\[
\Hom(M_i,\Sigma^p X_j)=\begin{cases} k & \text{if } i=j \text{ and }p=0,\\
0 & \text{otherwise.}
\end{cases}
\]
It follows that for any autoequivalence $F$ of $\D(\Gamma)$, $F(M)$ and $F(X_1,\ldots,X_r)$ also correspond to each other under the bijection.
Thus it follows from (a) that any simple-minded collection of $\D_{\fd}(\Gamma)$ can be obtained from $\mathcal{S}$ by a sequence of spherical twist functors $\tw^{\pm}_i$ ($i\in Q_0$). In particular, any simple-minded collection is a spherical collection and the desired result follows.
\end{proof}

%%%%%%%%%%%%%%%%%%%%%%%%%%%%%%%%%%%%%%%%%%%%%%%%%%%%%%%%


\begin{thebibliography}{99}
\bibitem[AMY]{AMY} T.~Adachi, Y.~Mizuno, D.~Yang, 
\emph{Discreteness of silting objects and t-structures in triangulated categories}, Proc. Lond. Math. Soc.  
\textbf{118} (1) 2019, 1--42.


\bibitem[A]{Ai13} T.~Aihara, \emph{Tilting-connected symmetric algebras}, Algebr. Represent. Theory \textbf{16} (2013), no.~3, 873--894.

\bibitem[AI]{AI12} T.~Aihara, O.~Iyama, \emph{Silting mutation in triangulated categories}, J. London. Math. Soc. (2) \textbf{85} (2012), no.~3, 633--668.

\bibitem[AM]{AM15} T.~Aihara, Y.~Mizuno, \emph{Classifying tilting complexes over preprojective algebras of Dynkin type}, Algebra Number Theory \textbf{11} (2017), no. 6, 1287--1315.

%\bibitem[AHMV]{AHMV} L.~Angeleri H\"ugel, F.~Marks, J.~Vit\'oria, \emph{Silting modules}, Int. Math. Res. Not. IMRN \textbf{2016}, no.~4, 1251--1284.

%\bibitem[AHMV2]{AHMV2} L.~Angeleri H\"ugel, F.~Marks, J.~Vit\'oria, \emph{Silting modules and ring epimorphisms}, Adv. Math. \textbf{303} (2016), 1044--1076.

%\bibitem[An]{An07} R.~Anno, \emph{Spherical functors}, arXiv:0711.4409v5.

\bibitem[AL]{AL13} R.~Anno, T.~Logvinenko, \emph{Spherical DG-functors}, J. Eur. Math. Soc. (JEMS) 19 (2017), no. 9, 2577--2656.

%\bibitem[AS]{AS80} M.~Auslander, S.~O.~Smal{\o}, \emph{Preprojective modules over Artin algebras}, \new{Journal of Algebra, \textbf{66}, no.~1 , 1980, 61--122}.

\bibitem[BDL]{BapatDeopurkarLicata23} 
A.~Bapat, A.~Deopurkar and A.~Licata, \emph{Spherical objects and stability conditions on 2-Calabi--Yau quiver categories}, Math. Z. \textbf{303}, no. 1, Paper No. 13, 22 pp.

%\bibitem[BBD]{BBD81} A.~A.~Beilinson, J.~Bernstein, P.~Deligne, \emph{Faisceaux pervers}, Analysis and topology on singular spaces, I (Luminy, 1981), 5--171, Ast$\acute{\mathrm{e}}$risque, \textbf{100}, Soc. Math. France, Paris, 1982. 

\bibitem[BIRS]{BIRS09} A.~B.~Buan, O.~Iyama, I.~Reiten, J.~Scott, \emph{Cluster structures for 2-Calabi--Yau categories and unipotent groups}, Compos. Math. \textbf{145} (2009), no.~4, 1035--1079.

%\bibitem[BR]{BR07} A.~Beligiannis, I.~Reiten, \emph{Homological and homotopical aspects of torsion theories}, Mem. Amer. Math. Soc. \textbf{188} (2007), no.~883.

%\bibitem[Bo]{Bo10} \new{M.~V.~Bondarko}, \emph{Weight structures vs. {$t$}-structures; weight filtrations, spectral sequences, and complexes (for motives and in general)}, J. K-Theory \textbf{6} (2010), no.~3, 387--504.

\bibitem[BT]{BT11} C.~Brav, H.~Thomas, \emph{Braid groups and Kleinian singularities}, Math. Ann. \textbf{351} (2011), no.~4, 1005--1017.

%\bibitem[Br1]{Br1} T.~Bridgeland, \emph{Stability conditions on triangulated categories}, Ann. Math. (2), \textbf{166} (2007), no.~2, 317--345.

%\bibitem[Br2]{Br2} T.~Bridgeland, \emph{$t$-structures on some local Calabi--Yau varieties}, J. Algebra \textbf{289} (2005), no.~2, 453--483.

%\bibitem[BPP1]{BPP1} N.~Broomhead, D.~Pauksztello, D.~Ploog, \emph{Discrete derived categories I: homomorphisms, autoequivalences, t-structures}, Math. Z. \textbf{285} (2017), no.~1-2, 39--89.%arXiv:1312.5203.

%\bibitem[BPP2]{BPP2}  N.~Broomhead, D.~Pauksztello, D.~Ploog, \emph{Discrete derived categories II: The silting pairs CW complexes and the stability manifold}, J. Lond. Math. Soc. (2) \textbf{93} (2016), no.~2, 273--300.%arXiv:1407.5944.

\bibitem[BY]{BY13} T.~Br\"ustle, D.~Yang, \emph{Ordered exchange graphs. Advances in representation theory of algebras}, 135--193, EMS Ser. Congr. Rep., Eur. Math. Soc., Zurich, 2013.

\bibitem[BRT]{BRT11} A.~B.~Buan, I.~Reiten, H.~Thomas, \emph{Three kinds of mutations}, J. Algebra \textbf{339} (2011), 97--113.

%\old{\bibitem[BZ]{BZ16} A.~B.~Buan, Y.~Zhou, \emph{A silting theorem}, J. Pure Appl. Algebra \textbf{220} (2016), no. 7, 2748--2770.}%arXiv:1503.06129.

%\bibitem[DIJ]{DIJ15} L.~Demonet, O.~Iyama, G.~Jasso, \emph{$\tau$-tilting finite algebras, \new{bricks} and $g$-vectors}, arXiv:1503.00285v6, \new{to appear in Int. Math. Res. Not.}.

%\bibitem[DWZ]{DWZ08} H.~Derksen, J.~Weyman, A.~Zelevinsky, \emph{Quivers with potentials and their representations I: Mutations}, Selecta Mathematica \textbf{14} (2008), \new{no.~1}, 59--119.

%\bibitem[EJR]{EJR16} F.~Eisele, G.~Janssens, T.~Raedschelders, \emph{A reduction theorem for $\tau$-rigid modules}, arXiv:1603.04293v1.

%\bibitem[FZ]{FZ03} S.~Fomin, A.~Zelevinsky, \emph{Cluster algebras II. Finite type classification}, Invent. Math. \textbf{154} (2003), no.~1, 63--121.

\bibitem[F]{Fu23}
R.~Fushimi, \emph{The correspondence between silting objects and t-structures for non-positive dg algebras}, arXiv:2312.17597v2.

%\bibitem[Gi]{Gi06} V.~Ginzburg, \emph{Calabi-Yau algebras}, arXiv:math/0612139v3.

%\bibitem[Gu]{Gu11} L.~Guo, \emph{Cluster tilting objects in generalized higher cluster categories}, J. Pure Appl. Algebra \textbf{215} (2011), no.~9, 2055--2071.

%\bibitem[HRS]{HRS96} D.~Happel, I.~Reiten, S.~O.~Smal\o, \emph{Tilting in abelian categories and quasitilted algebras}, Mem. Amer. Math. Soc. {\bf 120} (1996), no. 575.


\bibitem[GLS1]{GLS1}C.~Geiss, B.~Leclerc, J.~Schr{\"o}er, \emph{Rigid modules over preprojective algebras},  Invent. Math. 165 (2006), no. 3, 589--632.


\bibitem[GLS2]{GLS2}C.~Geiss, B.~Leclerc, J.~Schr{\"o}er, \emph{Kac-Moody groups and cluster algebras}, Adv. Math. 228 (2011), no. 1, 329--433.

\bibitem[HW]{HaraWemyss22}
W.~Hara and M.~Wemyss, \emph{Spherical objects in dimension two and three}, arXiv:2205.11552.

\bibitem[HKP]{HKP16} A.~Hochenegger, M.~Kalck, D.~Ploog, \emph{Spherical subcategories in algebraic geometry}, Math. Nachr. \textbf{289} (2016), no.~11-12, 1450--1465.

%\bibitem[HKM]{HKM02} M.~Hoshino, Y.~Kato, J.~Miyachi, \emph{On $t$-structure and torsion theories induced by compact objects},  J. Pure Appl. Algebra \textbf{167} (2002), no. 1, 15--35. 

%\bibitem[I]{I17} A.~Ikeda, \emph{Stability conditions on $CY_N$ categories associated to $A_n$-quivers and period maps}, Math. Ann. \textbf{367} (2017), no.~1-2, 1--49.%arXiv:1405.5492v2.

\bibitem[IJY]{IJY14} O.~Iyama, P.~J{\o}rgensen, D.~Yang, \emph{Intermediate co-$t$-structures, two-term silting objects, $\tau$-tilting modules and torsion classes}, Algebra Number Theory \textbf{8} (2014), no.~10, 2413--2431.


\bibitem[IR]{IR}
O.~Iyama, I.~Reiten, \emph{Fomin-{Z}elevinsky mutation and tilting modules
  over {C}alabi-{Y}au algebras}, Amer. J. Math. 130 (2008), no.~4,
  1087--1149.


\bibitem[IY]{IYa14} O.~Iyama, D.~Yang, \emph{Silting reduction and Calabi--Yau reduction of triangulated categories}, Trans. Amer. Math. Soc. 370 (2018), no. 11, 7861--7898.

%\old{\bibitem[IYo]{IYo08} O.~Iyama, Y.~Yoshino, \emph{Mutation in triangulated categories and rigid Cohen--Macaulay modules}, Invent. Math. \textbf{172} (2008), no.~1, 117--168.}

\bibitem[KaY1]{KalckYang16} M.~Kalck, D.~Yang, \emph{Relative singularity categories I: Auslander resolutions}, Adv. Math. 301 (2016), 973--1021.

\bibitem[KaY2]{KalckYang18b} M.~Kalck, D.~Yang, \emph{Relative singularity categories II: DG models}, arXiv:1803.08192v1.

\bibitem[KT]{KT}  C.~Kassel, V.~Turaev, \emph{Braid groups}, Graduate Texts in Mathematics, 247. Springer, New York, 2008.


\bibitem[K1]{Ke1} B.~Keller, \emph{Deriving DG categories}, Ann. Sci. Ecole Norm. Sup. (4) \textbf{27} (1994), no.~1, 63--102. 

\bibitem[K2]{Ke2} B.~Keller, \emph{Deformed Calabi-Yau completions. With an appendix by Michel Van den Bergh}, J. Reine Angew. Math. \textbf{654} (2011), 125--180.

\bibitem[K3]{Ke3} B.~Keller, \emph{Triangulated Calabi--Yau categories}, Trends in Representation Theory of Algebras (Zurich) (A.~Skowro\'nski, ed.), European Mathematical Society, 2008, 467--489.
  
\bibitem[K4]{Ke4} B.~Keller, \emph{On differential graded categories}, International Congress of Mathematicians. Vol. II, Eur. Math. Soc., Z\"urich, 2006, 151--190.

\bibitem[KN1]{KN1} B.~Keller, P.~Nicol{\'a}s, \emph{Weight structures and simple dg modules for positive dg algebras}, Int. Math. Res. Not. IMRN  2013, no.~5, 1028--1078.

\bibitem[KN2]{KN2} B.~Keller, P.~Nicol{\'a}s, \emph{Cluster hearts and cluster tilting objects}, work in preparation.


%\old{\bibitem[KR]{KellerReiten08} B.~Keller, I.~Reiten, \emph{Acyclic Calabi--Yau categories}, Compos. Math. \textbf{144} (2008), 1332--1348.}

%\bibitem[KV]{KV88} B.~Keller, D.~Vossieck, \emph{Aisles in derived categories}, Bull. Soc. Math. Belg. S$\acute{\mathrm{e}}$r. A \textbf{40} (1988), no.~2, 239--253. 

\bibitem[KeY]{KeY11} B.~Keller, D.~Yang, \emph{Derived equivalences from mutations of quivers with potential}, Adv. Math. \textbf{226} (2011), no.~3, 2118--2168. With an appendix by Bernhard Keller.

\bibitem[KoY]{KoY14} S.~Koenig, D.~Yang, \emph{Silting objects, simple-minded collections, $t$-structures and co-$t$-structures for finite-dimensional algebras}, Doc. Math. \textbf{19} (2014), 403--438.

%\bibitem[L]{L01} T.~Y.~Lam, \emph{A first course in noncommutative rings}, second edition. Graduate Texts in Mathematics \textbf{131}, Springer-Verlag, New York, 2001.

%\bibitem[MS]{MS16} F.~Marks, J.~Stovicek, \emph{Universal localisations via silting}, arXiv:1605.04222v2.

%\bibitem[MSSS]{MSSS13} O.~Mendoza, E.~C.~S\'aenz, V.~Santiago, M.~J.~Souto Salorio, \emph{Auslander-buchweitz context and co-$t$-structures}, Appl. Categ. Structures \textbf{21} (2013), no.~5, 417--440.

\bibitem[M1]{M14} Y.~Mizuno, \emph{Classifying $\tau$-tilting modules over preprojective algebras of Dynkin type}, Math. Z. \textbf{277} (2014), no.~3-4, 665--690.

\bibitem[M2]{M21} Y.~Mizuno, \emph{Derived Picard groups of preprojective algebras of Dynkin type}, 
Int. Math. Res. Not. IMRN(2021), no. 7, 5111-5154.

\bibitem[N]{N} P. Nicol\'as, Notes of the talk ``Cluster-hearts and cluster-tilting objects" in Stuttgart, available as https://pnp.mathematik.uni-stuttgart.de/iaz/iaz1/activities/t-workshop/NicolasNotes.pdf.

%\bibitem[P]{P08} Y.~Palu, \emph{Cluster characters for 2-Calabi--Yau triangulated categories}, Ann. Inst. Fourier (Grenoble) \textbf{58} (2008), no.~6, 2221--2248.

%\bibitem[PSZ]{PSZ17}, D.~Pauksztello, M.~Saor\'in, A.~Zvonareva, \emph{Contractibility of the stability manifold for silting-discrete algebras}, arXiv:1705.10604v1.

%\old{\bibitem[Pl]{Pl} P.~Plamondon, \emph{Cluster algebras via cluster categories with infinite-dimensional morphism spaces}, Compos. Math. \textbf{147} (2011), no.~6, 1921--1954.}

%\bibitem[PV]{PV15} C.~Psaroudakis, J.~Vit\'oria, \emph{Realisation functors in tilting theory}, arXiv:1511.02677v2.

%\bibitem[Q]{Q15} Y.~Qiu, \emph{Stability conditions and quantum dilogarithm identities for Dynkin quivers}, Adv. Math. \textbf{269} (2015), 220--264.

%\bibitem[QW]{QW14} Y.~Qiu, J.~Woolf, \emph{Contractible stability spaces and faithful braid group actions}, arXiv:1407.5986v2.

\bibitem[ST]{ST01} P.~Seidel, R.~Thomas, \emph{Braid group actions on derived categories of coherent sheaves}, Duke Math. J. \textbf{108} (2001), no.~1, 37--108.


%\bibitem[SY]{SY16} H.~Su, D.~Yang, \emph{From simple-minded collections to silting objects via Koszul duality}, arXiv:1609.03767v3.

%\bibitem[W]{W10} J.~Woolf, \emph{Stability conditions, torsion theories and tilting}, J. Lond. Math. Soc. (2) \textbf{82} (2010), no.~3, 663--682.

%\bibitem[Y]{Y09} D.~Yang, \emph{From triangulated categories to cluster algebras (after Palu)}, talk given in Bielefeld Representation Seminar, July 2009. Talk notes available as http://maths.nju.edu.cn/$\sim$dyang/mytalk/\\TriCatCluAlg.pdf.
  
%\bibitem[Z]{Z08} B.~Zhu, \emph{Generalized cluster complexes via quiver representations}, J. Algebr. Comb. \textbf{27} (2008), \new{no.~1}, 35--54.
\end{thebibliography}
\end{document}